\documentclass[a4paper, 12pt]{article}
\usepackage{amsmath,tcolorbox}
\usepackage{amssymb,amsthm,txfonts,pifont}
\usepackage[runin]{abstract}

\usepackage{enumitem}
\setenumerate[1]{itemsep=0pt,partopsep=0pt,parsep=\parskip,topsep=5pt}
\setitemize[1]{itemsep=0pt,partopsep=0pt,parsep=\parskip,topsep=5pt}
\setdescription{itemsep=0pt,partopsep=0pt,parsep=\parskip,topsep=5pt}


\setlength{\oddsidemargin}{-0.7cm}
\setlength{\textwidth}{17cm}
\setlength{\textheight}{23.5cm}
\setlength{\topmargin}{-1cm}

\setlength{\abstitleskip}{-18pt}
\abslabeldelim{\ \ }
\setlength{\absleftindent}{0pt}
\setlength{\absrightindent}{0pt}

\newtheorem{theorem}{Theorem}[section]

\newtheorem*{cftheorem}{Theorem A}
\newtheorem{lemma}[theorem]{Lemma}
\newtheorem{remark}[theorem]{Remark}
\newtheorem{definition}[theorem]{Definition}
\newtheorem{proposition}[theorem]{Proposition}

\newenvironment{e*}{\begin{equation*}}{\end{equation*}}


%
\def\slashii#1{\setbox0=\hbox{$#1$}             
	\dimen0=\wd0                                 
	\setbox1=\hbox{\sl/} \dimen1=\wd1            
	\ifdim\dimen0>\dimen1                        
	\rlap{\hbox to \dimen0{\hfil\sl/\hfil}}   
	#1                                        
	\else                                        
	\rlap{\hbox to \dimen1{\hfil$#1$\hfil}}   
	\hbox{\sl/}                               
	\fi}                                         %
%
\def\slashiii#1{\setbox0=\hbox{$#1$}#1\hskip-\wd0\hbox to\wd0{\hss\sl/\/\hss}}
%


\begin{document}
	\pagestyle{plain}
	
	\title{Existence of Solutions to a super-Liouville equation with Boundary Conditions}
	
	\author{Mingyang Han\\
		\small{School of Mathematical Sciences, Shanghai Jiao Tong University}\\
		\small{Shanghai, China}\\\\
		\small{ Ruijun Wu}\\
		\small{ School of Mathematics and Statistics, Beijing Institute of Technology}\\
		\small{Beijing, China}\\\\
		\small{Chunqin Zhou}
		\footnote{The third author was partially supported by NSFC Grant 12031012 and STCSM Grant 24ZR1440700.}
		\\
		\small{School of Mathematical Sciences, MOE-LSC, CMA-Shanghai, Shanghai Jiao Tong University}\\
		\small{Shanghai, China}}
	
	\date{}
	\maketitle

	\begin{abstract}
		In this paper, we study the existence of solutions to a type of super-Liouville equation on the compact Riemannian surface $M$ with boundary and with its Euler characteristic $\chi(M)<0$. The boundary condition couples a Neumann condition for functions and a chirality boundary condition for spinors. Due to the generality of the equation, we introduce a weighted Dirac operator based on the solution to a related Liouville equation. Then we construct a Nehari manifold according to the spectral decomposition of the weighted Dirac operator, and use minimax theory on this Nehari manifold to show the existence of the non-trivial solutions.
	\end{abstract}

	{\bf Keywords}  Non-trivial solutions, Super-Liouville equation, Neumann-Chirality boundary problem, Minimax theory
	
	\section{Introduction}
	
	The Liouville equation is a two-dimensional mathematical structure first delineated in \cite{liouville1853equation}, and is expressed in a large number of mathematical and physical scenarios in the following form
	\begin{equation}\label{Liouv}
		- \Delta u = K{e^{2u}} - {K_0}
	\end{equation}
	where $K$ and $K_0$ are given functions, and $u$ is a real-valued function. Within the realm of mathematics, this equation emerges prominently in complex analysis and differential geometry, particularly when addressing the problem of prescribed curvature. Specifically, if $(M,g)$ is a Riemann manifold with its curvature $K_0$ and if $K$ is a prescribed function, is $K$ the curvature of some metric $g'$ that is pointwise conformal to $g$? Solving this problem is equivalent to solving (\ref{Liouv}) on the manifold $(M, g) $. If $M$ is a closed manifold , there are massive results about this problem. For example, when $\chi(M)<0$, Kazdan, Warner and Berger in \cite{MR295261,MR375153} showed that if $0\not\equiv K\leq 0$, then the problem (\ref{Liouv}) has only one solution. When $\chi(M)=0$, Kazdan, Warner and Berger in \cite{MR295261,kazdan1974curvature} proved that (\ref{Liouv}) can be solved if and only if either $K \equiv 0 $, or $K $ changes sign and satisfies
	\[\int_M {Kd{v_g} < 0} .\]
	When $\chi(M)>0$ as well as $M=S^2$ and $g$ is the standard metric, the problem known as Nirenberg problem and there are a large number of researches about this problem, for example, see
	\cite{chang1993nirenberg,chang1988conformal,changMR0908146,kazdan1974curvature,MR339258} and the references therein.
	
	In physics, Liouville equation are presented in string theory \cite{polyakov1987gauge,polyakov1981quantum}, Hawking mass \cite{shi2019uniqueness}, electrical weakness and Chern-Simons self double vortices \cite{spruck1992multivortices,tarantello1996multiple,MR1838682}. In quantum mechanics, the Liouville equation describes the evolution of quantum states over time. Inspired by the theory of supersymmetry in quantum mechanics, Jost, Wang and Zhou in \cite{jost2007super} introduced the following supersymmetry conformal invariant system
	\begin{equation}\label{S_L_compact}
		\begin{cases}
			{ - \Delta u =K{e^{2u}} - {K_g} +\rho {e^u}{{\left| \psi    \right|}^2}},&\text{in}\ {M}, \\
			{\slashiii{D}\psi    = \rho{e^u}\psi  },&\text{in}\ {M},
		\end{cases}
	\end{equation}	
	which is called the super-Liouville equation. Here, $(M,g)$ is a compact Riemann surface with a fixed  spin structure and  with the Gaussian curvature $K_g$, $\psi$ is a spinor and $\slashiii{D}$ is the Dirac operator acting on spinors $\psi$. If $\psi \equiv 0$, then (\ref{S_L_compact}) degenerates to a Liouville equation. The physical background of the super-Liouville equation can be referred to \cite{distler1990super,fukuda2002super,hadasz2014super}.
	
	When $M$ is a compact Riemann surface without boundary, Jost with his coauthors in \cite{jost2007super,jost2009energy,jost2015local} studied the compactness of solution spaces and showed blow up analysis of the equation (\ref{S_L_compact}) with $K \equiv 2$ and $\rho\equiv-1$. As an application of these series of works, Jost, Wang, and Zhou researched super-Liouville equation on Riemann surfaces $M$ with conical singularities and established the geometric and analytic properties for this equation in \cite{jost2019vanishing}.
	
	While the general existence of solutions to the super-Liouville equation is an interesting problem. The first result was shown by Jevnikar, Malchiodi and Wu in \cite{jevnikar2020existence1}, where they used variational methods to find non-trivial solutions to (\ref{S_L_compact}) according to the parameter $\rho$ on closed Riemann surfaces with genus $\chi(M)<0$ when $K\equiv-1$. They showed that if the spin structure of $M$ is chosen so that $0 \notin$ Spec$\left( {{\slashiii{D}_g}} \right)$, then for any $\rho \notin$ Spec$\left( {{\slashiii{D}_g}} \right)$, there exists a non-trivial solution to (\ref{S_L_compact}). In \cite{MR4206467}, they dealt with the equation (\ref{S_L_compact}) on sphere $M=S^2$ for $K\equiv 1$ and $\chi(M)>0$. They combined the variational methods with the bifurcation theory to prove the existence of non-trivial solutions when $\rho\to \lambda_k\in $Spec$\left( {{\slashiii{D}_g}} \right)$.
	
	When $M$ is a compact Riemann surface with non-empty boundary $\partial M$, the following boundary value problem of super-Liouville equation 
	\begin{equation}\label{key}
		\begin{cases}
			{ - \Delta u = h\left( x \right){e^{2u}} - {K_g} + \rho {e^u}{{\left| \psi    \right|}^2}},&\text{in}\ {M^o}, \\
			{\slashiii{D}\psi    = \rho {e^u}\psi  },&\text{in}\ {M^o}, \\
			{\frac{{\partial u}}{{\partial n}} = \lambda(x){e^u} - {k_g}},&\text{on}\ {\partial M}, \\
			{{\mathbf{B}^{\pm} }\psi    = 0},&\text{on}\ {\partial M}
		\end{cases}
	\end{equation}	
	received much consideration. Here $k_g$ is the geodesic curvature of $\partial M$ and $\mathbf{B}^{\pm}$ which means $\mathbf{B}^{+}$ or $\mathbf{B}^{-}$ is a chirality boundary condition of spinors, see Section 2. In \cite{jost2014boundary,jost2014qualitative,jost2021energy}, Jost, Zhou and Zhu et al. used blow up analysis to study the compactness of the space of solutions and the regularity of solutions to (\ref{key}) with $h(x)\equiv2,\ \rho=-1,$ and $ \lambda(x)\equiv c$ is a constant. Similarly, in this case, the equation (\ref{key}) is conformal invariant.
	
	In this paper, we will consider the solution existence for the problem (\ref{key}). Recall the Gauss-Bonnet formula and Euler formula
	$$
	\int_M {{K_g}d{v_g}}  + \int_{\partial M} {{k_g}d{\sigma _g} = 2\pi \chi (M) = 2 - 2\gamma - b},
	$$
	where $\chi (M)$ is the Euler characteristic of $M$, $\gamma$ is the genus of $M$ and $b$ is the number of boundary components. In the sequel, we assume $\chi (M)<0$ and  the curvature functions $K_g$ and $k_g$ are smooth.  If we call a solution $(u,\psi)$ a non-trivial solution, what we mean is $\psi \not\equiv 0$. Clearly, if $\psi\equiv 0$, the boundary problem (\ref{key}) is reduced to the following Liouville equation with Neumann boundary condition
	\begin{equation}\label{Liouville-boundary}
		\begin{cases}
			{ - \Delta u = h\left( x \right){e^{2u}} - {K_g} },&\text{in}\ {M^o}, \\
			{\frac{{\partial u}}{{\partial n}} = \lambda(x){e^{u}} - {k_g}},&\text{on}\ {\partial M}. \\
		\end{cases}
	\end{equation}	
	For this equation, there have been many studies on the special cases of (\ref{Liouville-boundary}). For example, when $\chi(M)>0$ i.e. $M$ is a disk, one can see \cite{MR1417436,chang1988conformal,MR3836128,MR2103946} and their references. When $M$ is a annulus whose $\chi(M)=0$, see \cite{MR2870901,MR4517687}. When $\chi(M)<0$, we get the following result from \cite{caju2024blow,MR4517687}.
	\begin{cftheorem}\label{Th1.1}
		Let $(M, g)$ be a compact Riemannian surface with $\partial M \neq \emptyset$ and $\chi \left( M \right)<0$. If $h(x) \in C^{\infty}(M)$ and $\lambda(x) \in C^{\infty}(\partial M)$ are non-positive functions with $h(x) \not \equiv 0$ or $\lambda(x) \not \equiv 0$, then the problem (\ref{Liouville-boundary}) admits a unique solution.
	\end{cftheorem}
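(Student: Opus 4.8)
The plan is to obtain the solution as the unique minimizer of the energy functional
$$
J(u)=\tfrac12\int_M|\nabla u|^2\,dv_g+\int_M K_g u\,dv_g+\int_{\partial M}k_g u\,d\sigma_g-\tfrac12\int_M h\,e^{2u}\,dv_g-\int_{\partial M}\lambda\,e^{u}\,d\sigma_g
$$
on $H^1(M)$. A direct computation of the first variation shows that weak critical points of $J$ are precisely weak solutions of (\ref{Liouville-boundary}): variations supported in $M^o$ produce the interior equation and the remaining boundary terms produce the Neumann-type condition. By the Moser--Trudinger inequality on a compact surface with boundary, together with its trace counterpart controlling $\int_{\partial M}e^{u}$ by $\|\nabla u\|_{L^2(M)}^2$, the functional $J$ is well defined and of class $C^1$ on $H^1(M)$. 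First I would record that $J$ is \emph{convex}: the Dirichlet and linear terms are convex, while $-\tfrac12\int_M h\,e^{2u}=\tfrac12\int_M|h|e^{2u}$ and $-\int_{\partial M}\lambda e^{u}=\int_{\partial M}|\lambda|e^{u}$ are convex because $h,\lambda\le0$ and $t\mapsto e^{t},e^{2t}$ are convex.

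The heart of the argument will be \emph{coercivity}: $J(u)\to+\infty$ as $\|u\|_{H^1(M)}\to\infty$. I would write $u=c+v$ with $c=\tfrac1{|M|}\int_M u\,dv_g$ and $\int_M v\,dv_g=0$, so $\|u\|_{H^1}^2\sim c^2+\|\nabla v\|_{L^2}^2$. Using the Poincar\'e inequality and the trace theorem to bound $\big|\int_M K_g v+\int_{\partial M}k_g v\big|\le C\|\nabla v\|_{L^2}$, and the Gauss--Bonnet identity $\int_M K_g\,dv_g+\int_{\partial M}k_g\,d\sigma_g=2\pi\chi(M)$, one would obtain
$$
J(u)\ \ge\ \tfrac14\|\nabla v\|_{L^2}^2-C+2\pi\chi(M)\,c+\tfrac12 e^{2c}\!\int_M|h|e^{2v}\,dv_g+e^{c}\!\int_{\partial M}|\lambda|e^{v}\,d\sigma_g.
$$
If $c\to-\infty$ the term $2\pi\chi(M)c\to+\infty$ (here $\chi(M)<0$ is essential) and the rest is bounded below, so $J\to+\infty$; if $c$ stays bounded then $\|u\|_{H^1}\to\infty$ forces $\|\nabla v\|_{L^2}\to\infty$ and the quadratic term wins. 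The delicate regime is $c\to+\infty$, where one must defeat the negative linear term: assuming, say, $h\not\equiv0$, choose $\varepsilon>0$ with $|A|>0$ for $A=\{|h|\ge\varepsilon\}$; Jensen's inequality on $A$ together with $\int_A v\,dv_g=-\int_{M\setminus A}v\,dv_g$ and the Poincar\'e bound on $\|v\|_{L^2}$ yield $\int_M|h|e^{2v}\,dv_g\ge\varepsilon|A|\,e^{-C'\|\nabla v\|_{L^2}}$. Splitting according to whether $\|\nabla v\|_{L^2}$ is comparable to $c$ — if it is, the quadratic term is $\gtrsim c^2$; if it is not, $e^{2c}e^{-C'\|\nabla v\|_{L^2}}\ge e^{c}$ — shows $J\to+\infty$ in either case. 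If instead $\lambda\not\equiv0$, the same scheme runs on the boundary exponential term using a trace version of the Jensen/Poincar\'e estimate. This case analysis is the main obstacle, and it is exactly here that both hypotheses, $\chi(M)<0$ and ($h\not\equiv0$ or $\lambda\not\equiv0$), are used.

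Once convexity and coercivity are in hand, $J$ is weakly lower semicontinuous on the reflexive space $H^1(M)$ — the convex exponential terms are strongly continuous, hence weakly l.s.c., the Dirichlet term is weakly l.s.c., and the linear terms are weakly continuous — so the direct method gives a minimizer $u^*\in H^1(M)$, which is a weak solution of (\ref{Liouville-boundary}). Standard elliptic regularity for the Neumann problem then applies: $h e^{2u^*}-K_g\in L^p(M)$ and $\lambda e^{u^*}-k_g\in L^p(\partial M)$ for every $p$ by Moser--Trudinger, so $u^*\in W^{2,p}$, hence $u^*\in C^{1,\alpha}$, and Schauder bootstrapping upgrades $u^*$ to a $C^\infty$ classical solution. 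Finally, for \emph{uniqueness} I would argue directly: if $u_1,u_2$ solve (\ref{Liouville-boundary}) and $w=u_1-u_2$, testing the difference of the equations against $w$ gives $\int_M|\nabla w|^2=\int_M h(e^{2u_1}-e^{2u_2})w+\int_{\partial M}\lambda(e^{u_1}-e^{u_2})w\le0$, since $h,\lambda\le0$ and $t\mapsto e^{t},e^{2t}$ are increasing; hence $w\equiv c_0$ is constant. Substituting back forces $h(1-e^{-2c_0})e^{2u_1}\equiv0$ on $M$ and $\lambda(1-e^{-c_0})e^{u_1}\equiv0$ on $\partial M$, and because $h\not\equiv0$ or $\lambda\not\equiv0$ this gives $c_0=0$, i.e. $u_1=u_2$. (Equivalently, the same hypothesis makes $J$ strictly convex, whence the minimizer is unique.)
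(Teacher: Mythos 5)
The paper does not prove Theorem A at all: it is quoted as a known result with a citation to the references \cite{caju2024blow,MR4517687}, so there is no internal proof to compare against. Your proposal supplies a complete, self-contained argument, and it is correct; it is also the standard route for such results (the direct method applied to a convex, coercive energy, which is the boundary analogue of the Kazdan--Warner/Berger treatment of the closed case $\chi(M)<0$). The two genuinely delicate points are handled properly: the coercivity as $c\to+\infty$, where the sign hypotheses on $h,\lambda$ together with the nontriviality assumption and the Jensen/Poincar\'e lower bound $\int_M|h|e^{2v}\ge \varepsilon|A|e^{-C'\|\nabla v\|_{L^2}}$ (or its trace version) defeat the negative linear term $2\pi\chi(M)c$; and the uniqueness, where monotonicity of $t\mapsto e^t$ forces the difference to be constant and the nontriviality of $h$ or $\lambda$ kills the constant. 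I would only flag one cosmetic imprecision in the regularity step: Neumann data merely in $L^p(\partial M)$ does not directly give $W^{2,p}(M)$; the standard bootstrap first upgrades $u^*$ to $W^{1,q}$ for all $q$ (hence $C^{0,\alpha}(\overline M)$) using the $L^p$ integrability of $e^{u^*}$ on $M$ and $\partial M$ from the two Moser--Trudinger inequalities, and only then applies Schauder theory; this is the ``standard procedure'' the paper alludes to after stating Theorem A, and it does not affect the validity of your argument.
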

	Under the assumptions on the coefficients in Theorem A, a standard procedure can be used to obtain that the unique solution to (\ref{Liouville-boundary}) is a smooth solution. In the sequel, we denote that $f$ is the solution to the problem  (\ref{Liouville-boundary}).
	
	We aim to find non-trivial solutions $(u,\psi)$ to (\ref{key}). With the aid of Theorem A, after a conformal transformation with respect to a metric, we can simplify the problem (\ref{key}) to
	\begin{equation}\label{key2}
		\begin{cases}
			{ - \Delta u = h\left( x \right){e^{2u}} + {1} + \rho {e^u}{{\left| \psi    \right|}^2}},&\text{in}\ {M^o}, \\
			{\slashiii{D}\psi    = \rho {e^u}\psi  },&\text{in}\ {M^o}, \\
			{\frac{{\partial u}}{{\partial n}} = \lambda(x){e^u}},&\text{on}\ {\partial M}, \\
			{{\mathbf{B}^{\pm} }\psi    = 0},&\text{on}\ {\partial M},
		\end{cases}
	\end{equation}
	see Lemma \ref{simply}. The problem (\ref{key}) is the Euler-Lagrange equations of the real value functional
	\begin{equation}\label{functional}
		{E_\rho }\left( {u,\psi  } \right) = \int_M {\left\{ {{{\left| {\nabla u} \right|}^2} - 2u - h{e^{2u}} + 2\left\langle {\slashiii{D}\psi   - \rho {e^u}\psi  ,\psi  } \right\rangle } \right\}d{v_g}}  - 2\int_{\partial M} \lambda{e^u}  d{\sigma_g},
	\end{equation}
	which is defined on the Sobolev space ${H^1}\left( M \right) \times {H_B^{\frac{1}{2}}}(\mathbb{S} M)$ for real-valued functions and spinors, see Section 2. The difficulty in finding non-trivial solutions to this problem is due to the strong indefiniteness of the functional caused by the Dirac operator. In order to overcome this difficulty, the usually method is to construct the Nehari manifold, which is a natural constraint, for example see \cite{jevnikar2020existence1}. While for the general equation (\ref{key}), the usual Nehari manifold will not work. Therefore we introduce the weighted Dirac operator with the weight $e^{-f}$,  where $f$ is the unique solution to (\ref{Liouville-boundary}). Based of the spectrum of the weighted Dirac operator, we can construct a Nehari manifold, which is also a natural constraint.  The process, up to $\rho $, of finding non-trivial solutions uses different Minimax methods on the constructed Nehari manifold. When $\rho $ is less than the first positive eigenvalue of the weighted Dirac operator, we can use the mountain pass theorem. When $\rho $ is between different positive eigenvalues, we can use the homotopic linking theorem. Thanks to the topological assumption $\chi(M)<0$, the $(PS)$ sequence on the constructed Nehari manifold is a convergent sequence. Therefore we obtain the following theorem.
	\begin{theorem}\label{main-Thm}
		Let $(M, g)$ be a compact Riemann surface with a fixed spin
		structure, and with $\partial M \neq \emptyset$ and $\chi \left( M \right)<0$. Suppose that $h(x) \in C^{\infty}(M)$ and $\lambda(x) \in C^{\infty}(\partial M)$ are negative functions, meanwhile $\mathbf{B}^{\pm} $ is a chirality boundary condition. Then for any $\rho>0$, there is a solution to (\ref{key}), and if $0<\rho \notin$ Spec$\left( {{e^{ - {f}}}{\slashiii{D}_g}} \right)$, there is a non-trivial solution to (\ref{key}). Here $f$ is the unique solution to (\ref{Liouville-boundary}).
	\end{theorem}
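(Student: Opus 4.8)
The plan is to carry out the strategy announced in the introduction in five stages.

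\emph{Reduction and analytic setting.} By Lemma~\ref{simply}, a conformal change of metric built from the (smooth) solution $f$ of (\ref{Liouville-boundary}) transforms (\ref{key}) into (\ref{key2}), whose solutions are exactly the critical points on $\mathcal H:=H^1(M)\times H^{\frac12}_B(\mathbb{S} M)$ of the functional $E_\rho$ in (\ref{functional}), now with the curvature terms absorbed into constants. I first record that $E_\rho\in C^1(\mathcal H)$: the terms $\int_M h e^{2u}\,dv_g$, $\int_M e^{u}|\psi|^2\,dv_g$ and $\int_{\partial M}\lambda e^{u}\,d\sigma_g$ are finite and smooth in $(u,\psi)$ by the Moser--Trudinger inequality on $M$, its trace (boundary) version, and the embeddings $H^{\frac12}_B(\mathbb{S} M)\hookrightarrow L^p(\mathbb{S} M)$ for every $p<\infty$. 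The trivial solution $(f,0)$ exists for every $\rho$ (this is Theorem~A in the new metric), which already proves the first assertion; from now on I look for critical points with $\psi\not\equiv 0$, assuming $\rho>0$ and $\rho\notin\operatorname{Spec}(e^{-f}\slashiii{D}_g)$.

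\emph{The weighted Dirac operator and the splitting.} Set $\mathcal D_f:=e^{-f}\slashiii{D}_g$, acting on spinors with $\mathbf B^{\pm}\psi=0$. It is self-adjoint for the weighted inner product $(\psi,\varphi)\mapsto\int_M e^{f}\langle\psi,\varphi\rangle\,dv_g$, has compact resolvent (elliptic estimates plus the chirality boundary condition), hence a discrete real spectrum $\{\lambda_k\}$ with $|\lambda_k|\to\infty$ and a complete weighted--$L^2$ orthonormal system of smooth eigenspinors $\{\phi_k\}$, which also forms a topological basis of $H^{\frac12}_B(\mathbb{S} M)$. Since $\rho\notin\operatorname{Spec}(\mathcal D_f)$ this produces the splitting $H^{\frac12}_B(\mathbb{S} M)=H^+_\rho\oplus H^-_\rho$, $H^\pm_\rho:=\overline{\operatorname{span}}\{\phi_k:\pm(\lambda_k-\rho)>0\}$, on which the quadratic form $\psi\mapsto\int_M\langle\slashiii{D}\psi-\rho e^{f}\psi,\psi\rangle=\int_M e^{f}\langle(\mathcal D_f-\rho)\psi,\psi\rangle$ is positive, resp.\ negative, definite. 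Writing $H^-_\rho=H^-_\ast\oplus V_\rho$ with $H^-_\ast:=\overline{\operatorname{span}}\{\phi_k:\lambda_k<\lambda_1\}$ ($\lambda_1$ the smallest positive eigenvalue) and $V_\rho:=\operatorname{span}\{\phi_k:\lambda_1\le\lambda_k<\rho\}$, the space $V_\rho$ is finite-dimensional; it is trivial exactly when $0<\rho<\lambda_1$, and its dimension jumps by the multiplicity each time $\rho$ crosses a positive eigenvalue. This finite-dimensional jump is what separates the mountain-pass regime from the linking regime.

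\emph{Nehari manifold, reduction, and min--max.} Because $-h>0$, $-\lambda>0$ and $\chi(M)<0$, the ``$u$-part'' of $E_\rho$ is coercive in the Kazdan--Warner manner of Theorem~A, so along energy-bounded configurations $u$ stays in a fixed $H^1(M)$-ball and $\|e^{u-f}\|_{L^\infty}$ is controlled; using this, the quadratic form $\varphi^+\mapsto\int_M\langle\slashiii{D}\varphi^+-\rho e^{u}\varphi^+,\varphi^+\rangle$ is positive definite on $H^+_\rho$, so one can solve the $H^+_\rho$-component of the Euler--Lagrange equation to obtain, for each $(u,\psi^-)$ with $\psi^-\neq 0$, a unique $\psi^+=\Phi(u,\psi^-)\in H^+_\rho$; this reduces $E_\rho$ to a functional of $(u,\psi^-)$ on the Nehari-type constraint $\mathcal N_\rho=\{(u,\psi^-+\Phi(u,\psi^-))\}$, which is a $C^1$ natural constraint (critical points of $E_\rho|_{\mathcal N_\rho}$ are critical points of $E_\rho$, necessarily non-trivial). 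On $\mathcal N_\rho$ the reduced functional has, when $0<\rho<\lambda_1$, a mountain-pass geometry around the trivial solution (it increases on small ``spheres'' and tends to $-\infty$ along rays into $H^-_\ast$ with $u$ adjusted), giving a Palais--Smale sequence at a level $c_\rho>E_\rho(f,0)$; when $\lambda_m<\rho<\lambda_{m+1}$ the finite-dimensional subspace $V_\rho$ creates a homotopic link (a small sphere in the ``positive'' directions, i.e.\ the $u$-variable together with $H^+_\rho$, linking the boundary of a large disc in $H^-_\ast\oplus V_\rho$), and the homotopic linking theorem again yields a Palais--Smale sequence at the corresponding min--max level.

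\emph{Compactness and conclusion; the main obstacle.} It remains to verify the Palais--Smale condition on $\mathcal N_\rho$: for a Palais--Smale sequence $(u_n,\psi_n)$ at level $c_\rho$, the coercivity of the Liouville part forces $u_n$ to be bounded in $H^1(M)$ (this is precisely where $\chi(M)<0$ and $h,\lambda<0$ enter, via Theorem~A-type a priori bounds preventing concentration of $e^{2u_n}$ and $e^{u_n}$), and then $e^{u_n}$ is bounded in every $L^p$, whence the Dirac equation together with the fact that $\psi_n$ stays bounded away from $0$ on $\mathcal N_\rho$ gives a uniform $H^{\frac12}$-bound for $\psi_n$; weak limits are then upgraded to strong convergence using the compact embeddings $H^1(M)\hookrightarrow L^p(M)$, $H^1(M)\to L^p(\partial M)$ (trace) and $H^{\frac12}_B(\mathbb{S} M)\hookrightarrow L^p(\mathbb{S} M)$, together with testing $E_\rho'$. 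The resulting critical point $(u,\psi)$ of $E_\rho$ has $\psi\not\equiv 0$, and transporting it back through Lemma~\ref{simply} produces the non-trivial solution of (\ref{key}). I expect the main difficulty to lie exactly in this last step --- the coupled compactness --- since the interaction term $\rho e^{u}|\psi|^2$ ties together the (good, coercive) behaviour in $u$ and the (strongly indefinite) behaviour in $\psi$: one must simultaneously exclude concentration of $u_n$ and degeneration of $\psi_n$ while keeping the Nehari reduction $\Phi$ well defined along the sequence, which is the very reason the whole construction is anchored at $f$ through the weighted operator $e^{-f}\slashiii{D}_g$ rather than at an arbitrary background metric. A secondary technical point is confirming the linking dimension, i.e.\ that the relevant Morse-index jump is governed by $\#\{\operatorname{Spec}(\mathcal D_f)\cap(0,\rho)\}$, which follows since $\slashiii{D}-\rho e^{u}$ differs from an operator of the form $e^{u}(\mathcal D_f-\rho)$ by a lower-order, in particular compact, perturbation.
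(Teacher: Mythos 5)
Your overall blueprint --- conformal reduction via Lemma \ref{simply}, anchoring everything at the weighted operator $e^{-f}\slashiii{D}_g$, a Nehari-type constraint, mountain pass for $0<\rho<\lambda_1^f$ versus linking for $\rho$ between positive eigenvalues, and compactness coming from $\chi(M)<0$ and $h,\lambda<0$ --- is the paper's. But the constraint you actually build is different from the paper's, and it has a genuine gap. You split $H_B^{\frac{1}{2}}(\mathbb{S}M)$ according to the sign of $\lambda_k-\rho$ and propose to solve the $H^+_\rho$-component of the Euler--Lagrange equation for $\psi^+=\Phi(u,\psi^-)$. This requires the form $\varphi\mapsto\int_M\langle\slashiii{D}\varphi-\rho e^{u}\varphi,\varphi\rangle\,dv_g$ to be coercive on $H^+_\rho$ \emph{for the given $u$}; written with the weight, it equals $\int_M e^{f}\langle(e^{-f}\slashiii{D}-\rho e^{u-f})\varphi,\varphi\rangle\,dv_g$, and coercivity fails as soon as $\rho e^{u}$ exceeds the eigenvalue threshold --- which is exactly the regime the min--max construction must reach (the mountain-pass endpoint is $(f+t,s\varphi_1^f)$ with $\rho e^{t}>\lambda_1^f+1$, chosen precisely so that the form becomes \emph{negative} on $\varphi_1^f\in H^+_\rho$). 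So $\Phi$ is not globally defined, your manifold does not contain the admissible paths and links, and the reduction collapses where it is most needed. Your fallback --- that energy-bounded configurations keep $\|e^{u-f}\|_{L^\infty}$ controlled --- is also false: $H^1$ does not embed in $L^\infty$ in dimension two, and the paths are in any case not energy-bounded from below. The paper sidesteps all of this by constraining only the genuinely negative spectral directions of $e^{-f}\slashiii{D}$ (eigenvalues $\lambda_j^f<0$): on that subspace $\int_M\langle\slashiii{D}\zeta-\rho e^{u}\zeta,\zeta\rangle\,dv_g\le -C\|\zeta\|^2-\rho\int_M e^{u}|\zeta|^2\,dv_g<0$ for \emph{every} $u$ and every $\rho>0$, so the constraint $G(u,\psi)=0$ is well posed on all of $H^1(M)\times H_B^{\frac{1}{2}}(\mathbb{S}M)$, is a natural constraint, and contains $(u,0)$ for every $u$; the intermediate directions $0<\lambda_j^f<\rho$ are not absorbed into a reduction but form the finite-dimensional ball of the linking inside $\mathcal{N}_f$.

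Two further points in the same vein. First, your linking ``disc'' is described as a large disc in $H^-_\ast\oplus V_\rho$; since $H^-_\ast$ is infinite-dimensional this is not a $k$-topological ball, and the homotopic-linking theorem used here (Theorem \ref{pi_k}/\ref{Link-Thm}) requires a compact finite-dimensional $\mathcal{D}$ --- in the paper $\mathcal{D}$ is a cylinder over a ball in $\operatorname{span}\{\varphi_1^f,\dots,\varphi_k^f\}$ in the $(t,\varphi_{k+1}^f)$ direction, with the negative directions slaved by the constraint. Second, nontriviality of the critical point does not come from $\psi$ being ``bounded away from $0$'' on the manifold (it is not: $(u,0)\in\mathcal{N}_f$ for all $u$, and excluding $\psi^-=0$ from your manifold would make it non-closed and ruin the Palais--Smale analysis); it comes from the min--max level satisfying $c>I(f)=E_\rho(f,0)$, which is exactly what the local lower bounds near $(f,0)$ on the sphere $\partial B_r$ outside the cone are designed to give.
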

	
	\begin{remark} We would like to mention that when $(M, g)$ is a closed Riemann surface with  $\chi \left( M \right)<0$, we yield by using the similar argument as Theorem \ref{main-Thm} that, for any negative function $ K(x) \in C^{\infty}(M)$ and for any $\rho>0$, there is a solution to (\ref{S_L_compact}), and if $0<\rho \notin$ Spec$\left( {{e^{ - {f}}}{\slashiii{D}_g}} \right)$, there is a non-trivial solution to (\ref{S_L_compact}). Here $f$ is the unique solution to
		\begin{equation*}
			- \Delta u = K{e^{2u}} - {K_g}, \quad \text { in } M.
		\end{equation*}
		So in this paper we also extend the existence result in \cite{jevnikar2020existence1} for
		$$
		\left\{ {\begin{array}{*{20}{l}}
				{ - \Delta u =  - {e^{2u}} -K_g + \rho{e^u}{{\left| \psi  \right|}^2},}&{{\text{in}}\;M,} \\
				{\slashiii{D}\psi  = \rho{e^u}\psi ,}&{{\text{in}}\;M.}
		\end{array}} \right.
		$$	
		to the general equation (\ref{S_L_compact}).
	\end{remark}

	The outline of this article is as follows. In the second section we introduce some preliminaries about Moser-Trudinger inequalities and some basic knowledge about spin geometry on manifolds with boundary. In the third section we use the Minimax theory to prove the existence of solutions to (\ref{key}). The last section is devoted to prove the $(PS) _c $ condition on Nehari manifolds.
	\section{Preliminary} 
	\subsection{Moser-Trudinger  inequalities}
	Moser-Trudinger type inequalities are important tools for dealing with Liouville type equations. As usual, we use $H^1(M)$ to represent the Sobolev space $W^{1,2}(M, \mathbb{R})$ on a manifold $M$ and use $|M|$ and $|\partial M|$ to represent the volumes of $M$ and $\partial M$, respectively. A Moser-Trudinger type inequality on the compact Riemann surface with boundary from \cite{chang1988conformal} is
	\begin{equation}\label{MT_ineq1}
		\log \int_{M} e^{2u}dv_g \leq  \frac {1}{2 \pi }\int_{M}|\nabla u|^2dv_g+\frac{2}{|M|} \int_{M} udv_g+C, \quad \forall u \in H^1(M) .
	\end{equation}
	While the Moser-Trudinger type inequality on the boundary of the compact Riemann surface from \cite{li2005moser} is
	\begin{equation}\label{MT_ineq2}
		\log \int_{\partial M} e^{u}d\sigma_g \leq \frac {1}{4\pi}\int_{M}|\nabla u|^2dv_g+\frac{1}{|\partial M|} \int_{\partial M} ud\sigma_g+C, \quad \forall u \in H^1(M) .
	\end{equation}
	These inequalities will be used to prove the convergence of $(PS)_c$ sequences in the sequel.
	
	\subsection{Spinors} 
	In this subsection, we will briefly review the knowledge about spinors. Let $(M,g)$ be a Riemann surface of genus $\gamma$ with a spin fixed structure. $\mathbb{S}M$ is the spinor bundle on $M$ with a natural Hermitian product $\langle\cdot, \cdot\rangle$ induced by $g$. A spinor $\psi \in \Gamma(\mathbb{S}M)$ is a section of $\mathbb{S}M$. Let $Cl\left( {{T_x}M,{g_x}} \right)$ represent the Clifford algebra generated by $T_xM $ and $g_x$ with Clifford relationship
	$$
	{X_i}\cdot{X_j} + {X_j}\cdot{X_i} =  - 2g_x({X_i},{X_j}).
	$$
	Donate
	$$
	Cl(M,g)=\mathop \coprod \limits_{x \in M} Cl\left( {{T_x}M,{g_x}} \right).
	$$
	There is a representation
	$$
	\rho:TM\to End(\mathbb{S}M),
	$$
	which can be extended to $Cl(M,g)$
	\[\begin{array}{*{20}{c}}
		{\rho :}&{Cl(M,g) \otimes \mathbb{S}M}& \to &{\mathbb{S}M} \\
		{}&{\sigma  \otimes \psi }& \mapsto &{\rho (\sigma ) \cdot \psi }
	\end{array}.\]
	We will abbreviate $\rho(X)\cdot\psi$ as $X\cdot\psi$. The representation $\rho $ is compatible with connection and Hermitian metrics. This means that if $\left\{e_1, e_2\right\}$ is a local orthonormal frame on $T M$ with
	$$
	e_i \cdot e_j \cdot \psi+e_j \cdot e_i \cdot \psi=-2 \delta_{i j} \psi,
	$$
	and $\nabla$ is the spin connection on $\mathbb{S}M$,
	then $\left\{e_1, e_2\right\}$ satisfies
	$$
	{\nabla _{{e_i}}}\left( {{e_j} \cdot \psi } \right) = \left( {{\nabla _{{e_i}}}{e_j}} \right) \cdot \psi  + {e_j} \cdot \left( {{\nabla _{{e_i}}}\psi } \right),
	$$
	and
	$$
	\langle\psi,\varphi\rangle=\left\langle e_i \cdot \varphi, e_i \cdot \psi\right\rangle.
	$$
	The Dirac operator $\slashiii{D}$ is defined by
	$$
	\slashiii{D} \psi:=\sum_{\alpha=1}^2 e_\alpha \cdot \nabla_{e_\alpha} \psi.
	$$
	For more information on spinors, see \cite{burevs1994harmonic,MR0358873,jost2008riemannian}.
	
	If $M$ be a compact Riemann surface with $\partial M \neq \emptyset$, the Dirac operator $\slashiii{D}$ satisfies
	$$
	\int_M\langle\psi, \slashiii{D} \varphi\rangle d v=\int_M\langle\slashiii{D} \psi, \varphi\rangle d v-\int_{\partial M}\langle\vec{n} \cdot \psi, \varphi\rangle d s, \quad \forall \psi, \varphi \in \Gamma(\mathbb{S}M) .
	$$
	It is clear that the Dirac operator is in general not self-adjoint. However, under some boundary conditions, the operator $\slashiii{D}$ is self-adjoint. From \cite[Definition 1.5.1]{ginoux2009dirac}, we know that an elliptic boundary condition for a Dirac operator is a pseudo-differential operator $\mathbf{B} : L^2(\mathbb{S} M) \longrightarrow L^2(V)$, where $V$ is some Hermitian vector bundle on $\partial M$, such that the boundary value problem
	\begin{equation*}
		\begin{cases}
			{\slashii{D}\varphi  = \Phi },&\ on\ M, \\
			{\mathbf{B}({{\left. \varphi  \right|}_{\partial M}}) = \chi},&\  on\ \partial M,\\
		\end{cases}
	\end{equation*}	
	has smooth solutions up to a finite-dimensional kernel for any given smooth data $\Phi \in \Gamma(\mathbb{S} M)$ and $\chi \in \Gamma(V)$ belonging to a certain subspace with finite codimension. Next, we will introduce a elliptic boundary condition which is a chirality boundary condition for spinors.
	
	The chirality boundary condition was introduced in \cite{gibbons1983positive} when proving the positive mass theorem of black holes. A chirality operator $G$ is an endomorphism of the spinor bundle $\mathbb{S}M$ satisfying:
	$$
	G^2=I, \quad\langle G \psi, G  \varphi\rangle=\langle\psi, \varphi\rangle, \quad \nabla_X(G \psi)=G \nabla_X \psi, \quad X \cdot G \psi=-G(X \cdot \psi),
	$$
	for any $X \in \Gamma(T M), \psi, \varphi \in \Gamma(\mathbb{S}M)$. Here $I$ denotes the identity endomorphism of $\mathbb{S}M$. Usually, we take $G=\rho\left(\omega_2\right)$, the Clifford multiplication by the complex volume form $\omega_2=i e_1 e_2$, where $\left\{e_1, e_2\right\}$ is an local oriented orthonormal frame on $M$.
	
	Denote by $\mathbb{S}\partial M$ the restricted spinor bundle with induced Hermitian product. Let $\vec{n}$ be the outward unit normal vector field on $\partial M$. One can verify that $\vec{n} \cdot G: \Gamma(\mathbb{S}\partial M) \rightarrow \Gamma(\mathbb{S}\partial M)$ is a self-adjoint endomorphism satisfying
	$$
	(\vec{n} \cdot G)^2=I, \quad\langle\vec{n} \cdot G \psi, \varphi\rangle=\langle\psi, \vec{n} \cdot G  \varphi\rangle .
	$$
	Hence, we can decompose $\mathbb{S}\partial M=V^{+} \oplus V^{-}$, where $V^{ \pm}$ are the eigenbundles of $\vec{n} \cdot G$ corresponding to the eigenvalues $\pm 1$. One verifies that the orthogonal projection onto the eigenbundles $V^{ \pm}$:
	$$
	\begin{aligned}
		\mathbf{B}^{ \pm}: L^2(\mathbb{S}\partial M) & \rightarrow L^2\left(V^{ \pm}\right), \\
		\psi & \mapsto \frac{1}{2}(I \pm \vec{n} \cdot G) \psi,
	\end{aligned}
	$$
	defines a local elliptic boundary condition for the Dirac operator $\slashiii{D}$ (see \cite{hijazi2002eigenvalue}). We say that a spinor $\psi $ satisfies the chirality boundary conditions $\mathbf{B}^{ \pm}$ if
	$$
	\left.\mathbf{B}^{ \pm} \psi\right|_{\partial M}=0.
	$$
	According to \cite{ding2018boundary},
	$$
	\left.\mathbf{B}^{ \pm} \psi\right|_{\partial M}=0 \Leftrightarrow \psi \in C^{\infty}(\mathbb{S} M) \text { and }\left.\psi\right|_{\partial M} \in V^{ \mp}.
	$$
	
	For the space $C^{\infty}(\mathbb{S} M)$, define an inner product
	$$
	(\psi, \varphi)_{L^2}=\int_M\langle\psi, \varphi\rangle dv_g
	$$
	for each $\psi, \varphi \in C^{\infty}(\mathbb{S} M)$. $L^2(\mathbb{S} M)$ is the completion of $C^{\infty}(\mathbb{S} M)$ with respect to the corresponding norm $\|\cdot\|_{L^2}=(\cdot, \cdot)_{L^2}^{1 / 2}$. On the boundary $\partial M$, $L^2(\mathbb{S} \partial M)$ is also the completion of $C^{\infty}(\mathbb{S} \partial M)$ under the norm $\|\cdot\|_{L^2(\partial M)}=(\cdot, \cdot)_{L^2(\partial M)}^{1 / 2}$. Then we have $L^2(\mathbb{S} \partial M)=L^2 (V^+) \oplus L^2 (V^-)$. Define
	$$
	(\psi, \varphi)_{H^1}=(\psi, \varphi)_{L^2}+(\nabla \psi, \nabla \varphi)_{L^2}, \quad \forall \psi, \varphi \in C^{\infty}(\mathbb{S} M).
	$$
	Then $H^1(\mathbb{S} M)$ is the completion of the space $C^{\infty}(\mathbb{S} M)$ with respect to the norm $\|\cdot\|_{H^1}=(\cdot, \cdot)_{H^1}^{1 / 2}$. Since $\slashiii{D}$ is a first order operator, it extends to a linear operator $\slashiii{D}: H^1(\mathbb{S} M) \rightarrow$ $L^2(\mathbb{S} M)$. Let
	$$
	Dom(\slashiii{D})=\left\{\psi \in H^1(\mathbb{S} M):\left.\mathbf{B}^{ \pm} \psi\right|_{\partial M}=0\right\} .
	$$
	Then the Dirac operator $\slashiii{D}$ with chirality boundary condition $\left.\mathbf{B}^{ \pm} \psi\right|_{\partial M}=0$ is well defined in the domain $Dom(\slashiii{D})$. For $\psi, \varphi \in Dom(\slashiii{D})$, the integrated version of Lichnerowitz Formula indicate
	$$
	\int_M\langle \slashiii{D} \psi, \varphi\rangle dv_g-\int_M\langle\psi, \slashiii{D} \varphi\rangle dv_g=-\int_{\partial M}\langle\vec n \cdot  \psi, \varphi\rangle d \sigma_g,
	$$
	where $\vec n$ denotes the unit inner normal on $\partial M$. Since $\mathbf{B}^{ \pm} \psi  = \mathbf{B}^{ \pm} \varphi  = 0$ on $\partial M$, we have $\vec n\cdot G  \psi  =\pm \psi ,\ \vec n \cdot G  \varphi  =\pm \varphi $. In the case of $\vec n\cdot G  \psi  = \psi ,\ \vec n \cdot G  \varphi  = \varphi $, we have
	$$
	\begin{aligned}
		\left\langle {\vec n \cdot \psi ,\varphi } \right\rangle  &= \left\langle {\vec n \cdot \psi ,\left( {\vec{n} \cdot G} \right)\varphi } \right\rangle  = \left\langle {\left( {\vec{n} \cdot G} \right)\left( {\vec n \cdot \psi } \right),\varphi } \right\rangle  \hfill \\
		&= \langle G\psi ,\varphi \rangle  = \langle G\left( {\vec{n} \cdot G  \psi } \right),\varphi \rangle  \hfill \\
		&=  - \langle {G^2}(\vec n \cdot \psi) ,\varphi \rangle  =  - \langle \vec n \cdot \psi ,\varphi \rangle .
	\end{aligned}
	$$
	Similarly, in the case of $\vec n\cdot G  \psi  = -\psi ,\ \vec n \cdot G  \varphi  = -\varphi $, we also have
	$$
	\left\langle {\vec n \cdot \psi ,\varphi } \right\rangle =  - \langle \vec n \cdot \psi ,\varphi \rangle .
	$$
	Hence, if we suppose that $\psi, \varphi \in L^2(\Gamma(\mathbb{S} M))$ satisfy the chirality boundary condition $\mathbf{B}^{ \pm}$, then
	$$
	\langle\vec{n} \cdot \psi, \varphi\rangle=0, \quad \text { on } \partial M .
	$$
	In particular, there holds
	$$
	\int_{\partial M}\langle\vec{n} \cdot \psi, \varphi\rangle d\sigma_g=0,
	$$
	which make Dirac operator $\slashiii{D}$ self-adjoint.
	\subsection{Sobolev spaces for spinors}
	We first describe the spectrums of Dirac operator $\slashiii{D}$, donated by Spec($\slashiii{D}$), under the chirality boundary condition ${\mathbf{B}^{\pm} }\psi \left| {_{\partial M} = 0} \right.$. 
	First, there is a basic fact from \cite[Proposition 1.5.4]{ginoux2009dirac}.
	\begin{proposition}
		Let $\left(M^n, g\right)$ be a compact Riemannian spin manifold with non-empty boundary $\partial M$. Then the chirality boundary condition is elliptic. Moreover, the spectrum of $\slashii{D}$ is a real and discrete unbounded sequence under the chirality boundary condition.
	\end{proposition}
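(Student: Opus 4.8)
The plan is to treat the two claims in turn: first that the chirality projection $\mathbf{B}^{\pm}$ really is an elliptic boundary condition for $\slashiii{D}$ in the sense recalled after \cite[Definition 1.5.1]{ginoux2009dirac}, and then that $\slashiii{D}$ subject to it has real, discrete and unbounded spectrum. For the ellipticity I would work in a collar neighbourhood of $\partial M$ and write $\slashiii{D} = \vec n \cdot (\partial_t + A_t)$, where $t$ is the distance to the boundary and $A_t$ is, for each $t$, a first-order formally self-adjoint elliptic operator on the boundary slice, with principal symbol $a(\xi) = \vec n \cdot \xi \cdot$ acting on $\mathbb{S}\partial M$ for $\xi \in T^{*}\partial M \setminus \{0\}$. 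Since $a(\xi)^2 = |\xi|^2 I$ and $a(\xi)$ is self-adjoint, $\mathbb{S}\partial M$ splits as $F^{+}(\xi) \oplus F^{-}(\xi)$ into the $\pm|\xi|$-eigenbundles, which have equal rank. The ellipticity criterion, in the form of a transversality condition on principal symbols, requires that $\ker \mathbf{B}^{\pm} = V^{\mp}$ be a complement of $F^{+}(\xi)$ in $\mathbb{S}\partial M$ for every $\xi \neq 0$.

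The algebraic heart of the matter is the observation that $\vec n \cdot G$ anticommutes with $a(\xi)$: this follows from the chirality relation $G(X \cdot \psi) = -X \cdot G\psi$ together with $\xi \perp \vec n$ and $\vec n \cdot \vec n \cdot = -I$. Anticommutation forces $\vec n \cdot G$ to interchange $F^{+}(\xi)$ and $F^{-}(\xi)$; hence if $\psi \in F^{+}(\xi) \cap V^{\mp}$ then $a(\xi)\psi = |\xi|\psi$, while on the other hand, using that $\vec n \cdot G$ anticommutes with $a(\xi)$ and fixes $\psi$ up to sign, $a(\xi)\psi = -|\xi|\psi$, so $\psi = 0$. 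Since $\operatorname{rk} V^{\mp} = \operatorname{rk} F^{+}(\xi) = \tfrac12 \operatorname{rk} \mathbb{S}\partial M$, trivial intersection yields the required complementarity, so $\mathbf{B}^{\pm}$ is elliptic; equivalently $\tfrac12(I \pm \vec n \cdot G)$ restricts to an isomorphism $F^{+}(\xi) \to V^{\pm}$.

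For the spectral statement, the computation already carried out in this subsection shows $\langle \vec n \cdot \psi, \varphi\rangle = 0$ on $\partial M$ for $\psi, \varphi \in Dom(\slashiii{D})$, so the boundary term in the integrated Lichnerowicz formula vanishes and $\slashiii{D}$ is symmetric on $Dom(\slashiii{D})$. Because $\tfrac12(I \pm \vec n \cdot G)$ is a self-adjoint projection (recall $(\vec n \cdot G)^2 = I$ and $\langle \vec n \cdot G\psi, \varphi\rangle = \langle \psi, \vec n \cdot G\varphi\rangle$) and is elliptic, the general theory of elliptic boundary value problems for Dirac-type operators identifies the adjoint boundary condition with $\mathbf{B}^{\pm}$ itself, so $\slashiii{D}$ with this boundary condition is self-adjoint, whence $\operatorname{Spec}(\slashiii{D}) \subset \mathbb{R}$. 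Ellipticity also supplies the a priori estimate $\|\psi\|_{H^1} \leq C(\|\slashiii{D}\psi\|_{L^2} + \|\psi\|_{L^2})$ on $Dom(\slashiii{D})$; composing with the compact Rellich embedding $H^1(\mathbb{S}M) \hookrightarrow L^2(\mathbb{S}M)$, valid since $M$ is compact, shows the resolvent is compact. A self-adjoint operator with compact resolvent has purely discrete spectrum consisting of eigenvalues of finite multiplicity with no finite accumulation point, and since $\slashiii{D}$ is an unbounded operator this eigenvalue sequence must be unbounded.

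I expect the main obstacle to be the symbolic computation behind ellipticity — the anticommutation of $\vec n \cdot G$ with the tangential symbol $a(\xi)$ and the resulting transversality of $V^{\mp}$ to $F^{+}(\xi)$ — together with the (standard but not entirely formal) point that the adjoint boundary condition coincides with $\mathbf{B}^{\pm}$, so that symmetry genuinely upgrades to self-adjointness. Once those are in place, reality, discreteness, and unboundedness of the spectrum follow formally from self-adjointness and compactness of the resolvent.
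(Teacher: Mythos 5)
Your proposal is correct, but note that the paper itself does not prove this proposition: it is quoted verbatim as \cite[Proposition 1.5.4]{ginoux2009dirac}, so there is no in-paper argument to compare against. What you have written is essentially the standard proof from that reference: the Lopatinski--Shapiro/transversality check that $V^{\mp}$ is complementary to the positive eigenbundle $F^{+}(\xi)$ of the tangential symbol, via the anticommutation of $\vec n \cdot G$ with $a(\xi)$ (which follows from $G(X\cdot\psi)=-X\cdot G\psi$ and $\xi\perp\vec n$), followed by symmetry from the vanishing boundary term, identification of the adjoint boundary condition with $\mathbf{B}^{\pm}$ itself, and the elliptic-estimate-plus-Rellich argument for compact resolvent. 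Two small points to tighten: with the bare Clifford symbol one gets $(\vec n\cdot\xi\cdot)^2=-|\xi|^2$, so the self-adjoint tangential symbol whose square is $+|\xi|^2$ is $i\vec n\cdot\xi\cdot$ (the $i$ coming from the symbol convention); and the step you flag yourself --- that symmetry upgrades to self-adjointness because the adjoint realization has the same domain --- is exactly the content of the self-adjoint ellipticity of $\mathbf{B}^{\pm}$ established in Ginoux, so it deserves either a citation or the explicit Green's-formula computation identifying the adjoint boundary condition. With those caveats the argument is complete.
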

	Therefore, in $Dom(\slashiii{D})$, we can write $\operatorname{Spec}(\slashiii{D})$ as $\operatorname{Spec} (\slashiii{D}) = {\left\{ {{\lambda _k}} \right\}_{k \in {\mathbb{Z}_*}}} \cup {\operatorname{Spec} _0}(\slashiii{D})$, where $\mathbb{Z}_*$ = $\mathbb{Z} \backslash\{0\}$, ${\operatorname{Spec} _0}(\slashiii{D})$ represents the zero element in the spectrum, while $\lambda_k(k\in {\mathbb{Z}_*})$ are the non-zero eigenvalues.
			We denote that the dimension of vector space ${\operatorname{Spec} _0}(\slashiii{D})$ is $h^ 0 $, and put the non-zero eigenvalues in an increasing order (in absolute value) and counted with multiplicities to get
			$$
			-\infty \leftarrow \cdots \leq \lambda_{-l-1} \leq \lambda_{-l} \leq \cdots \leq \lambda_{-1} < 0 < \lambda_1 \leq \cdots \leq \lambda_k \leq \lambda_{k+1} \leq \cdots \rightarrow+\infty .
			$$
			Let $\varphi_k$ be the eigenspinors corresponding to $\lambda_k, k \in \mathbb{Z}_*$ with $\left\|\varphi_k\right\|_{L^2(M)}=1$. These eigenspinors together form a complete orthonormal basis of $L^2(\mathbb{S}M)$ : any spinor $\psi \in \Gamma(\mathbb{S}M)$ can be expressed in terms of this basis as
			$$
			\psi=\sum_{k \in \mathbb{Z}_*} a_k \varphi_k+\sum_{1 \leq j \leq h^0} a_{0, j} \varphi_{0, j},
			$$
			and the Dirac operator acts as
			$$
			\slashiii{D} \psi=\sum_{k \in \mathbb{Z}_*} \lambda_k a_k \varphi_k.
			$$
			
			According to \cite{ding2018boundary}, we know $Dom(\slashiii{D}) \subset C^{\infty}(\mathbb{S} M)$ is dense in $L^2(\mathbb{S} M)$. According to \cite[Corollary 3.7]{chen2018estimates}, the following equation has only trivial solution $\psi=0$:
			$$
			\left\{ {\begin{array}{*{20}{c}}
					{\slashiii{D}\psi  = 0,}&{in\ M,} \\
					{\mathbf{B}^{ \pm}\psi  = 0,}&{on\ \partial M.}
			\end{array}} \right.
			$$
			In other words, under chirality boundary condition, $\slashiii{D}$ has no zero eigenvalue and non-trivial harmonic spinor. 
			Then according to the spectrum of $\slashiii{D}$,  $L^2(\mathbb{S} M)$ possesses the following orthogonal decomposition
			$$
			L^2(\mathbb{S} M)=L^{+}(\mathbb{S} M) \oplus L^{-}(\mathbb{S} M),
			$$
			where
			$$
			\begin{gathered}
				{L^ + }(\mathbb{S}M) = \left\{ {\psi\in L^2(\mathbb{S} M) \left| {\psi  = \sum\limits_{j > 0} {{a_j}{\varphi _j}} } \right.} \right\} ,\hfill \\
				{L^ - }(\mathbb{S}M) = \left\{ {\psi\in L^2(\mathbb{S} M) \left| {\psi  = \sum\limits_{j < 0} {{a_j}{\varphi _j}} } \right.} \right\} ,
			\end{gathered}
			$$
			such that $$(\slashiii{D} \psi,\  \psi)_{L^2} \geq \lambda_1 \|\psi\|_{L^2}^2$$ for $\psi \in L^{+}(\mathbb{S} M) \cap Dom(\slashiii{D})$
			and $$(\slashiii{D} \psi, \psi)_{L^2} \leq -\lambda_{-1} \|\psi\|_{L^2}^2$$ for $\psi \in L^{-}(\mathbb{S} M) \cap Dom(\slashiii{D})$.
			
			For any $s>0$, the operator $|\slashiii{D}|^s: \Gamma(\mathbb{S}M) \rightarrow \Gamma(\mathbb{S}M)$ is defined as
			$$
			|\slashiii{D}|^s \psi=\sum_{k \in \mathbb{Z}_*}\left|\lambda_k\right|^s a_k \varphi_k,
			$$
			provided that the right-hand side belongs to $L^2(\mathbb{S}M)$. In particular, $|\slashiii{D}|^1$ defined in $L^2(\mathbb{S} M)$ is abbreviated as $|\slashiii{D}|$ with
			$$
			\operatorname{Spec}(|\slashiii{D}|)=\left\{\left|\lambda_k \right| : k \in \mathbb{Z}_*\right\}.
			$$
			Similarly,  the spectrum of $|\slashiii{D}|^{1 / 2}$ is
			$$
			\operatorname{Spec}\left(|\slashiii{D}|^{1 / 2}\right)=\left\{\left|\lambda_k \right|^{1 / 2} : k \in \mathbb{Z}_*\right\}.
			$$
			Let
			$$
			{H_B^{\frac{1}{2}}}(\mathbb{S}M) = Dom\left( {{{\left| \slashiii{D} \right|}^{\frac{1}{2}}}} \right) = \left\{ {\psi  \in {L^2}(\mathbb{S}M)\left| {\int_M {\left\langle {{{\left| \slashiii{D} \right|}^{\frac{1}{2}}}\psi ,{{\left| \slashiii{D} \right|}^{\frac{1}{2}}}\psi } \right\rangle } d{v_g} < \infty ,\;{\mathbf{B}^{\pm}}\psi  = 0} \right.} \right\}
			$$
			denote the domain of the operator $|\slashiii{D}|^{1 / 2}$.  It is clear that $|\slashiii{D}|^{1 / 2}$ is self-adjoint in $L^2(\mathbb{S} M)$. Define an inner product
			\begin{equation}\label{original_norm}
				{\langle \psi ,\phi \rangle _{{H^{\frac{1}{2}}}}} = {\langle \psi ,\phi \rangle _{{L^2}}} + {\left\langle {|\slashiii{D}{|^{\frac{1}{2}}}\psi ,|\slashiii{D}{|^{\frac{1}{2}}}\phi } \right\rangle _{{L^2}}}. \quad \forall \psi, \varphi \in H_B^{\frac{1}{2}}(\mathbb{S}M).
			\end{equation}
			Let $\|\cdot\|=\langle  \cdot , \cdot \rangle _{{H^{\frac{1}{2}}}}^{\frac{1}{2}}$. Then $(H_B^{\frac{1}{2}}(\mathbb{S}M),\|\cdot\|)$ is a Hilbert space. We define $H^{-\frac{1}{2}}(\mathbb{S}M)$ as the dual space of $H^{{\frac{1}{2}}}(\mathbb{S}M)$.
			From \cite[Lemma 5.2]{ding2018boundary}, we have
			\begin{lemma}\label{spinor_embed}
				$H_B^{\frac{1}{2}}(\mathbb{S}M)$ embeds $L^p(\mathbb{S}M)$ continuously for $1 \leq p \leq \frac{{2m}}{{m - 1}} = 4$. Moreover, this embedding is compact if $1 \leq p < \frac{{2m}}{{m - 1}} = 4$.
			\end{lemma}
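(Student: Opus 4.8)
The plan is to reduce the statement to the classical Sobolev and Rellich–Kondrachov embeddings by showing that the spectrally defined space $H_B^{\frac{1}{2}}(\mathbb{S}M)$ is continuously contained in the ordinary fractional Sobolev space $H^{\frac{1}{2}}(\mathbb{S}M)$ of sections of $\mathbb{S}M$. Indeed, for sections of a vector bundle over a compact $m$-dimensional manifold with boundary one has the continuous embedding $H^{\frac{1}{2}}(\mathbb{S}M)\hookrightarrow L^p(\mathbb{S}M)$ for $1\le p\le\frac{2m}{m-1}$, which for $m=2$ reads $1\le p\le 4$, and this embedding is compact when $1\le p<\frac{2m}{m-1}$. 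Since a bounded operator composed with a compact operator is again compact, once the inclusion $H_B^{\frac{1}{2}}(\mathbb{S}M)\hookrightarrow H^{\frac{1}{2}}(\mathbb{S}M)$ is established the lemma is immediate.

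To obtain that inclusion I would first invoke the elliptic estimate for the chirality boundary value problem. Since $\mathbf{B}^{\pm}$ is an elliptic boundary condition for $\slashiii{D}$ (the Proposition above, from \cite{ginoux2009dirac}), elliptic regularity up to the boundary provides a constant $C>0$ with
$$
\|\psi\|_{H^1(\mathbb{S}M)}\le C\bigl(\|\slashiii{D}\psi\|_{L^2(\mathbb{S}M)}+\|\psi\|_{L^2(\mathbb{S}M)}\bigr),\qquad\forall\,\psi\in Dom(\slashiii{D}),
$$
the reverse inequality being trivial because $\slashiii{D}$ is first order. Because $Dom(|\slashiii{D}|)=Dom(\slashiii{D})$ and $\||\slashiii{D}|\psi\|_{L^2}=\|\slashiii{D}\psi\|_{L^2}$ on this common domain, the estimate says precisely that the graph norm of $|\slashiii{D}|$ is equivalent to $\|\cdot\|_{H^1}$ on $Dom(|\slashiii{D}|)$, which by definition is $H^1_B(\mathbb{S}M):=\{\psi\in H^1(\mathbb{S}M):\mathbf{B}^{\pm}\psi|_{\partial M}=0\}$.

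Next I would pass to the half power by interpolation. The operator $|\slashiii{D}|$ is a nonnegative self-adjoint operator on $L^2(\mathbb{S}M)$ with compact resolvent, so its fractional-power domains coincide with the complex interpolation scale: $H_B^{\frac{1}{2}}(\mathbb{S}M)=Dom(|\slashiii{D}|^{\frac{1}{2}})=[L^2(\mathbb{S}M),\,Dom(|\slashiii{D}|)]_{\frac{1}{2}}$ with equivalent norms. On the other hand the bounded inclusion $H^1_B(\mathbb{S}M)\hookrightarrow H^1(\mathbb{S}M)$ together with the identity on $L^2(\mathbb{S}M)$ interpolate, by functoriality of complex interpolation, to the bounded inclusion $[L^2,H^1_B]_{\frac{1}{2}}\hookrightarrow[L^2,H^1]_{\frac{1}{2}}=H^{\frac{1}{2}}(\mathbb{S}M)$, the last identity being the standard interpolation characterization of fractional Sobolev spaces of bundle sections. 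Chaining this with the embeddings of the first paragraph gives the claim.

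I expect the only genuinely non-formal point to be the identification $Dom(|\slashiii{D}|^{\frac{1}{2}})=[L^2,Dom(|\slashiii{D}|)]_{\frac{1}{2}}$ together with $[L^2,H^1_B]_{\frac{1}{2}}\hookrightarrow H^{\frac{1}{2}}$ on the manifold with boundary, i.e. that the spectral scale of the boundary realization of $\slashiii{D}$ is compatible with the Calderón scale. Once the elliptic estimate above is in hand this is routine (for instance via complex interpolation of sectorial operators, or by localizing to a collar neighbourhood of $\partial M$ and comparing with the half-space model), but it is the step where the boundary and the condition $\mathbf{B}^{\pm}$ must be handled with care. Alternatively, the statement is exactly \cite[Lemma 5.2]{ding2018boundary}, so one may also simply cite it.
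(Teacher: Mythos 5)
The paper does not actually prove this lemma: it is quoted verbatim as \cite[Lemma 5.2]{ding2018boundary}, so your closing remark that one may simply cite that reference is exactly what the authors do. Your self-contained argument is nonetheless a legitimate and essentially standard route, and I see no gap in it: the chirality condition is an elliptic boundary condition (as recorded in the paper's Proposition 2.1), so the a priori estimate $\|\psi\|_{H^1}\le C(\|\slashiii{D}\psi\|_{L^2}+\|\psi\|_{L^2})$ on $Dom(\slashiii{D})$ is available, and since $|\slashiii{D}|$ is a nonnegative self-adjoint operator with compact resolvent the identification $Dom(|\slashiii{D}|^{1/2})=[L^2,Dom(|\slashiii{D}|)]_{1/2}$ is the classical Lions--Kato-type result for fractional powers of self-adjoint operators; functoriality of interpolation then yields $H_B^{1/2}(\mathbb{S}M)\hookrightarrow H^{1/2}(\mathbb{S}M)$, and the Sobolev exponent $2m/(m-1)=4$ for $m=2$ together with Rellich--Kondrachov finishes the claim. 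The one step you rightly single out as non-formal — compatibility of the spectral scale of the boundary realization with the Calder\'on/Sobolev scale on a manifold with boundary — is indeed where the work lies, and it is essentially the content of the cited lemma in \cite{ding2018boundary}; what your approach buys is independence from that reference at the cost of invoking the interpolation machinery explicitly.
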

		
		Splitting spinors into the positive and negative parts according to the spectrum of $\slashiii{D}=\slashiii{D}_g$, we have the decomposition
		\begin{equation}\label{2.2}
			H_B^{\frac{1}{2}}(\mathbb{S}M)=H_B^{\frac{1}{2},{+}}(\mathbb{S}M) \oplus H_{B}^{\frac{1}{2},{-}}(\mathbb{S}M),
		\end{equation}
		where
		$$
		H_B^{\frac{1}{2},{+}}(\mathbb{S}M)=H_B^{\frac{1}{2}}(\mathbb{S}M) \cap L^{+}(\mathbb{S}M),\quad H_{B}^{\frac{1}{2},{-}}(\mathbb{S}M)=H_B^{\frac{1}{2}}(\mathbb{S}M) \cap L^{-}(\mathbb{S}M).
		$$
		Write $\psi=\psi^{+}+\psi^{-}$, which is decomposed according to (\ref{2.2}). Then we have
		$$
		\int_M\left\langle\slashiii{D} \psi^{+}, \psi^{+}\right\rangle dv_g=\int_M\left\langle|\slashiii{D}|^{\frac{1}{2}} \psi^{+},|\slashiii{D}|^{\frac{1}{2}} \psi^{+}\right\rangle dv_g \geq \lambda_1\left(\slashiii{D} _g\right)\left\|\psi^{+}\right\|_{L^2(M)}^2,
		$$
		where $\lambda_1$ is the first positive eigenvalue of $\slashiii{D}=\slashiii{D}_g$. Hence
		$$
		\begin{aligned}
			\left\|\psi^{+}\right\|_{H^{\frac{1}{2}}}^2 & =\left\|\psi^{+}\right\|_{L^2}^2+\left\||\slashiii{D}|^{\frac{1}{2}} \psi^{+}\right\|_{L^2}^2
			\leq\left(\lambda_1\left(\slashiii{D}_g\right)^{-1}+1\right)\left\||\slashiii{D}|^{\frac{1}{2}} \psi^{+}\right\|_{L^2}^2\\ &\leq\left(\lambda_1\left(\slashiii{D}_g\right)^{-1}+1\right)\left\|\psi^{+}\right\|_{H^{\frac{1}{2}}}^2.
		\end{aligned}
		$$
		That is, for a given $g$, the integral $\int_M\left\langle\slashiii{D} \psi^{+}, \psi^{+}\right\rangle dv_g$ defines a norm on $H_B^{\frac{1}{2},+}(\mathbb{S} M)$ which is equivalent to the Hilbert's. Similarly, on $H_B^{\frac{1}{2},-}(\mathbb{S} M)$ there is an equivalent norm given by
		$$
		-\int_M\left\langle\slashiii{D} \psi^{-}, \psi^{-}\right\rangle dv_g=\left\||\slashiii{D}|^{\frac{1}{2}} \psi^{-}\right\|_{L^2}^2 .
		$$
		Consequently,
		\begin{equation}\label{2.3}
			\int_M\left[\left\langle\slashiii{D} \psi^{+}, \psi^{+}\right\rangle-\left\langle\slashiii{D} \psi^{-}, \psi^{-}\right\rangle\right] dv_g=\left\||\slashiii{D}|^{\frac{1}{2}} \psi^{+}\right\|_{L^2}^2+\left\||\slashiii{D}|^{\frac{1}{2}} \psi^{-}\right\|_{L^2}^2
		\end{equation}
		defines a norm equivalent to the $H_B^{\frac{1}{2}}$-norm.
		\subsection{Weighted Dirac operator}
		When dealing with the coupling term of the Dirac equation, there will be a weight  $e^u$ which should be dealt with. So we use a weighted Dirac operator, which is motivated by the argument in \cite{MR4470247,MR4206467}. Given $u \in H^1(\mathbb{S}M)$, consider the operator $e^{-u} \slashiii{D}_g$ and write $\left\{\lambda_j(u)\right\}$ and $\left\{\varphi_j(u)\right\}$ for associated eigenvalues and eigenspinors respectively:
		$$
		e^{-u} \slashiii{D}_g \varphi_j(u)=\lambda_j(u) \varphi_j(u), \quad \forall j \in \mathbb{Z}_* .
		$$
		It is clear that the above equalities could equivalently be viewed as weighted eigenvalue equations
		$$
		\slashiii{D}_g \varphi_j(u)=\lambda_j(u) e^u \varphi_j(u), \quad \forall j \in \mathbb{Z}_* ,
		$$
		$e^{-u} \slashiii{D}_g$ has no kernel, since $\slashiii{D}_g$ has no kernel.
		Furthermore these eigenspinors can be chosen to be orthonormal with respect to the weight $e^u$, namely, for any $j, k \in \mathbb{Z}_*$,
		$$
		\int_{{M}} {\left\langle {{\varphi _j}(u),{\varphi _k}(u)} \right\rangle } {e^u}d{v_g} = {\delta _{jk}} = \left\{ {\begin{array}{*{20}{c}}
				{0,\quad j \ne k} \\
				{1,\quad j = k}
		\end{array}} \right..
		$$
		The theory of weighted linear operators can refer to \cite{kato2013perturbation}. If there are no special statements in the subsequent parts of this article, we use $\lambda^u_j$ and $\varphi^u_j$ to represent the weighted eigenvalues and eigenvectors $\lambda_j(u)$ and $\varphi_j(u)$ respectively. Fixed $u$, for $\forall \psi ,\phi \in H_B^{\frac{1}{2}}(\mathbb{S}M)$ the following bilinear function
		$$
		{\left\langle {\psi ,\phi } \right\rangle _{H_{{u}}^{\frac{1}{2}}}} = \int_M {{e^{{u}}}\left\langle {\psi ,\phi } \right\rangle d{v_g}}  + \int_M {\left\langle {{{\left| \slashiii{D} \right|}^{\frac{1}{2}}}\psi ,{{\left| \slashiii{D} \right|}^{\frac{1}{2}}}\phi } \right\rangle d{v_g}}
		$$
		defines a new inner product of $H_B^{\frac{1}{2}}(\mathbb{S}M)$ and induced a norm
		$$
		{\left\| \psi  \right\|_{u}^2} := \int_M {{e^{{u}}}{{\left| \psi  \right|}^2}d{v_g}}  + \int_M {{{\left| {{{\left| \slashiii{D} \right|}^{\frac{1}{2}}}\psi } \right|}^2}d{v_g}} .
		$$
		For weighted operators $e^{-u} \slashiii{D}_g$, similar to $\slashiii{D}$, we can decompose it based on its spectrum
		\begin{equation}\label{fenjie}
			H_{B,{u}}^{\frac{1}{2}}(\mathbb{S}M) :=H_{B}^{\frac{1}{2}}(\mathbb{S}M)= H_{B,{u}}^{\frac{1}{2}, + }(\mathbb{S}M) \oplus H_{B,{u}}^{\frac{1}{2}, - }(\mathbb{S}M),
		\end{equation}
		where
		$$
		\begin{gathered}
			{H_{B,{u}}^{\frac{1}{2}, + }}(\mathbb{S}M) = \left\{ {\psi\in H_{B}^{\frac{1}{2} }(\mathbb{S} M) \left| {\psi  = \sum\limits_{j > 0} {{a_j}{\varphi _j^u}} } \right.} \right\} ,\hfill \\
			{H_{B,{u}}^{\frac{1}{2}, - } }(\mathbb{S}M) = \left\{ {\psi\in H_{B}^{\frac{1}{2} }(\mathbb{S} M) \left| {\psi  = \sum\limits_{j < 0} {{a_j}{\varphi _j^u}} } \right.} \right\} ,
		\end{gathered}
		$$
		and for any $\psi\in H_{B}^{\frac{1}{2}}(\mathbb{S}M)$,
		\[\psi  = \psi _u^ +  + \psi _u^ - ,\]
		where
		\[
		\psi _u^ + \in H_{B,{u}}^{\frac{1}{2}, + }\ \text{and}\ \psi _u^ - \in H_{B,{u}}^{\frac{1}{2}, - }.
		\]
		By the way, since it holds for
		$\forall \psi _{u}^ + \in H_{B,{u}}^{\frac{1}{2}, + }(\mathbb{S}M)$
		\begin{equation}\label{equ-norm}
			\begin{aligned}
				\int_M {\left\langle {\slashiii{D}{\psi _{u}^ + },{\psi _{u}^ + }} \right\rangle d{v_g}}  =& \int_M {{e^{{u}}}\left\langle {{e^{ - {u}}}\slashiii{D}{\psi _{u}^ + },{\psi _{u}^ + }} \right\rangle d{v_g}}  \\=& \int_M {{e^{{u}}}\left\langle {{{\left| {{e^{ - {u}}}\slashiii{D}} \right|}^{\frac{1}{2}}}{\psi _{u}^ + },{{\left| {{e^{ - {u}}}\slashiii{D}} \right|}^{\frac{1}{2}}}{\psi _{u}^ + }} \right\rangle d{v_g}} \\ \geq& \lambda _1^{{u}}\int_M {{e^{{u}}}{{\left| \psi _{u}^ +  \right|}^2}d{v_g}},
			\end{aligned}
		\end{equation}
		it implies that $\int_M {\left\langle {\slashiii{D} \cdot ,\cdot} \right\rangle d{v_g}}$ defines a equivalent norm of $H_{B,{u}}^{\frac{1}{2}, + }(\mathbb{S}M)$. Similarly, $-\int_M\left\langle\slashiii{D} \cdot, \cdot\right\rangle dv_g$ defines a equivalent norm of $H_{B,{u}}^{\frac{1}{2}, - }(\mathbb{S}M)$. For the remaining part of this article, we take the solution $f$ to (\ref{Liouville-boundary}) as the weight $u$. With the smoothness of $f$ and the compactness of $M$, $\left\|\cdot \right\|_f$ and $\left\|  \cdot   \right\|$ which is from (\ref{original_norm}) are equivalent norms of $H_{B}^{\frac{1}{2}}(\mathbb{S}M)$. For convenience, in the following we use $\left\|  \cdot   \right\|$ or $\left\|\cdot \right\|_{H^{\frac{1}{2}}}$ to represent the norm $\left\|  \cdot   \right\|_{f}$ of $H_{B}^{\frac{1}{2}}(\mathbb{S}M)$.
		
		\section{The Existence of Non-trivial Solutions}
		Firstly, for ease of handling, we demonstrate that equation (\ref{key}) is conformally invariant in the following sense, which allows us to simplify the original equation (\ref{key}) to the equation (\ref{key2}).
		\begin{lemma}\label{simply}
			The equation (\ref{key}) is invariant if we choose a new conformal metric $\widetilde g=e^{2\phi}g$ and set
			\begin{equation*}
				\left\{ {\begin{array}{*{20}{c}}
						{\widetilde u = u - \phi ,} \\
						{\widetilde \psi  = {e^{ - \frac{\phi }{2}}}\psi .}
				\end{array}} \right.
			\end{equation*}
			Therefore if we find a solution to the equation (\ref{key2}), then we find a solution to the equation (\ref{key}).
		\end{lemma}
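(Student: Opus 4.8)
The plan is to treat Lemma~\ref{simply} purely as a conformal–covariance statement: I would run the standard conformal dictionary for a change $\widetilde g = e^{2\phi}g$ on a surface through each of the four lines of (\ref{key}), and only at the end choose $\phi$ (via Theorem~A) so that the transformed system is literally (\ref{key2}). The dictionary I need is: on functions, $\Delta_{\widetilde g} = e^{-2\phi}\Delta_g$; for curvatures, $K_{\widetilde g}\,e^{2\phi} = K_g - \Delta_g\phi$ and $k_{\widetilde g}\,e^{\phi} = k_g + \partial_n\phi$, together with $\partial_{\widetilde n} = e^{-\phi}\partial_n$ for the unit normals; and on spinors, the canonical fibrewise isometry between the spinor bundles $\mathbb{S}_gM$ and $\mathbb{S}_{\widetilde g}M$ of the two metrics, under which, if $\widetilde\psi$ is the spinor corresponding to $e^{-\phi/2}\psi$, the Hermitian norm rescales as $|\widetilde\psi|_{\widetilde g} = e^{-\phi/2}|\psi|_g$ and the Dirac operators satisfy the classical two–dimensional conformal covariance $\slashiii{D}_{\widetilde g}\bigl(e^{-\phi/2}\psi\bigr) = e^{-3\phi/2}\,\slashiii{D}_g\psi$.

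With this dictionary the verification of the first three equations is mechanical. For the first line I write $-\Delta_{\widetilde g}\widetilde u = e^{-2\phi}\bigl(-\Delta_g u + \Delta_g\phi\bigr)$, substitute the first line of (\ref{key}) for $-\Delta_g u$, use $e^{-2\phi}(-K_g + \Delta_g\phi) = -K_{\widetilde g}$, and use $u = \widetilde u + \phi$ together with $|\psi|_g^2 = e^{\phi}|\widetilde\psi|_{\widetilde g}^2$; the remaining terms collapse to $h e^{2\widetilde u} + \rho e^{\widetilde u}|\widetilde\psi|_{\widetilde g}^2$, so the first line of (\ref{key}) in the metric $g$ is equivalent to $-\Delta_{\widetilde g}\widetilde u = h e^{2\widetilde u} - K_{\widetilde g} + \rho e^{\widetilde u}|\widetilde\psi|_{\widetilde g}^2$. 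For the Dirac line, the covariance identity gives $\slashiii{D}_{\widetilde g}\widetilde\psi = e^{-3\phi/2}\,\slashiii{D}_g\psi = e^{-3\phi/2}\rho e^{u}\psi = \rho e^{u-\phi}\widetilde\psi = \rho e^{\widetilde u}\widetilde\psi$. For the Neumann line, $\partial_{\widetilde n}\widetilde u = e^{-\phi}(\partial_n u - \partial_n\phi)$; substituting the third line of (\ref{key}) and using $e^{-\phi}(-k_g - \partial_n\phi) = -k_{\widetilde g}$ yields $\partial_{\widetilde n}\widetilde u = \lambda e^{\widetilde u} - k_{\widetilde g}$. Every manipulation is reversible, and $h,\lambda$ are scalar functions so they are not affected by the conformal change; hence these three equations are genuinely equivalent to their tilded versions.

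The step I expect to require the most genuine verification is the chirality boundary condition $\mathbf{B}^{\pm}\psi = 0$. Here I would check that the chirality operator $G = \rho(\omega_2)$, $\omega_2 = i e_1 e_2$, and hence the self–adjoint endomorphism $\vec n\cdot G$, is compatible with the identification $\mathbb{S}_gM\cong\mathbb{S}_{\widetilde g}M$: the complex volume form is insensitive to the conformal rescaling of an orthonormal frame, Clifford multiplication by unit vectors is intertwined by the identification, and the outward normal is only multiplied by the positive factor $e^{-\phi}$, which does not move the $\pm1$–eigenbundles $V^{\pm}$ of $\vec n\cdot G$. Consequently $\psi|_{\partial M}\in V^{\mp}$ if and only if $\widetilde\psi|_{\partial M}\in \widetilde V^{\mp}$, i.e. $\mathbf{B}^{\pm}\psi = 0 \Leftrightarrow \mathbf{B}^{\pm}\widetilde\psi = 0$, so the fourth line of (\ref{key}) is conformally invariant as well.

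It remains to pin down $\phi$. Since $\chi(M)<0$, Theorem~A applied to the Liouville–Neumann problem of the type (\ref{Liouville-boundary}) with the coefficient in front of $e^{2u}$ taken to be the constant $-1$ and the boundary coefficient taken to be $0$ (these satisfy the hypotheses of Theorem~A, as $-1\le 0$ and $-1\not\equiv 0$) produces a unique smooth $\phi$; by construction $\widetilde g = e^{2\phi}g$ then has $K_{\widetilde g}\equiv -1$ and $k_{\widetilde g}\equiv 0$, so $-K_{\widetilde g}$ and $-k_{\widetilde g}$ in the transformed system become $+1$ and $0$, that is, the transformed system is exactly (\ref{key2}) (with the same $h$ and $\lambda$). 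Therefore a solution $(\widetilde u,\widetilde\psi)$ of (\ref{key2}) yields $(u,\psi) = (\widetilde u + \phi,\, e^{\phi/2}\widetilde\psi)$ solving (\ref{key}). Beyond the chirality check, the only real risk in writing this up is notational: keeping the exponential weights and the signs in the curvature and geodesic–curvature identities consistent; the spinorial covariance identity and the conformal invariance of the chirality projection are standard facts but should be stated or cited carefully.
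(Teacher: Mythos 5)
Your proposal is correct and follows essentially the same route as the paper: the same conformal dictionary for $\Delta$, $\partial_n$, $K$, $k_g$, the same spinorial covariance identity $\slashiii{D}_{\widetilde g}(e^{-\phi/2}\psi)=e^{-3\phi/2}\slashiii{D}_g\psi$, the conformal invariance of the chirality condition (which the paper simply cites), and the same choice of $\phi$ via Theorem A applied to (\ref{Liouville-boundary}) with $h\equiv-1$, $\lambda\equiv 0$ to achieve $K_{\widetilde g}\equiv-1$, $k_{\widetilde g}\equiv 0$.
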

		\begin{proof}
			Recall that under the change of conformal metric $\widetilde g=e^{2\phi}g$, there are
			\[\left\{ {\begin{array}{*{20}{c}}
					{{\Delta _{\widetilde g}} = {e^{ - 2\phi }}{\Delta _g},} \\
					{\frac{\partial }{{\partial {n_{\widetilde g}}}} = {e^{ - \phi }}\frac{\partial }{{\partial {n_g}}},}
			\end{array}} \right.\]
			and transformations of curvatures
			\[\left\{ {\begin{array}{*{20}{c}}
					{{K_{\widetilde g}} = {e^{ - 2\phi }}\left( {{K_g} - {\Delta _g}\phi} \right),} \\
					{{k_{\widetilde g}} = {e^{ - \phi }}\left( {{k_g} + \frac{{\partial \phi}}{{\partial {n_g}}}} \right),}
			\end{array}} \right.\]
			see \cite{changMR0908146,MR4225836}.	Therefore we have
			\[\begin{aligned}
				- {\Delta _{\widetilde g}}\widetilde u &=  - {e^{ - 2\phi }}{\Delta _g}\left( {u - \phi } \right)  \\
				&= {e^{ - 2\phi }}\left( {h\left( x \right){e^{2u}} - {K_g} + \rho {e^u}{{\left| \psi  \right|}^2}}  + {\Delta _g}\phi \right ) \\
				&= h\left( x \right){e^{2\left( {u - \phi } \right)}} + {e^{ - 2\phi }}\left( { - {K_g} + {\Delta _g}\phi } \right) + \rho {e^{u - \phi }}{\left| {\widetilde \psi } \right|^2} \\
				&= h\left( x \right){e^{2\widetilde u}} - {K_{\widetilde g}} + \rho {e^{\widetilde u}}{\left| {\widetilde \psi } \right|^2}
			\end{aligned} \]
			and
			\[\begin{aligned}
				\frac{{\partial \widetilde u}}{{\partial {n_{\widetilde g}}}} &= {e^{ - \phi }}\frac{{\partial u}}{{\partial {n_g}}} - {e^{ - \phi }}\frac{{\partial \phi }}{{\partial {n_g}}}  \\
				&= {e^{ - \phi }}\left( {\lambda (x){e^u} - {k_g}} \right) - {e^{ - \phi }}\frac{{\partial \phi }}{{\partial {n_g}}}  \\
				&= \lambda (x){e^{u - \phi }} - {e^{ - \phi }}\left( {{k_g} + \frac{{\partial \phi }}{{\partial {n_g}}}} \right)  \\
				&= \lambda (x){e^{\widetilde u}} - {k_{\widetilde g}}.  \\
			\end{aligned}. \]
			Next we need to find a $\phi$ such that $K_{\widetilde g}=-1$ and $k_{\widetilde g}=0$, which is a solution to
			\[ {\begin{cases}
					{ - {\Delta _g}\phi  =  - {e^{2\phi }} - {K_g},} &\text{in}\ {M^o},\\
					{\frac{{\partial \phi }}{{\partial {n_g}}} =  - {k_g},}&\text{on}\ {\partial M}.
			\end{cases}} \]
			By Theorem A, there is a unique solution $\phi$. For the part of spinors, from \cite{MR0358873} or \cite{ginoux2009dirac}, we have
			\[
			\slashiii{D}_{\widetilde g}\left( {{e^{ - \frac{{n - 1}}{2}\phi}} \psi  } \right) = {e^{ - \frac{{n + 1}}{2}\phi}} {\slashiii{D}_g\psi }.\]
			Thus for $n=2$ we have
			\[{\slashiii{D}_{\widetilde g}}\widetilde \psi  = {\slashiii{D}_{\widetilde g}}\left( {{e^{ - \frac{\phi }{2}}}\psi } \right) = {e^{ - \frac{3}{2}\phi }}{\slashiii{D}_g}\psi  = {e^{ - \frac{3}{2}\phi }}\rho {e^u}\psi  = \rho e\widetilde {^u}\widetilde \psi. \]
			If $\mathbf{B}_g$ is a chirality boundary condition, from \cite{MR1902645,MR2248870} we know the local boundary  condition $\mathbf{B}_g$ is conformal invariant. Thus\[{{\mathbf{B}}_{\widetilde g}}\widetilde \psi  = {{\mathbf{B}}_g}\widetilde \psi  = {e^{ - \frac{\phi }{2}}}{{\mathbf{B}}_g}\psi  = 0,\] and we complete the proof of the Lemma.
			
		\end{proof}
		\begin{remark}
			The solvability of the following Liouville equation
			\[ - {\Delta _g}u =  - {e^{2u}} - {K_g}\]
			on a closed manifold with $\chi(M)<0$ can be referred to \cite{MR295261,MR375153}.					 
		\end{remark}
		For later convenience, let us introduce the notations
		$$
		I\left( u \right): = \int_M {\left\{ {{{\left| {\nabla u} \right|}^2} - h{e^{2u}} - 2u} \right\}d{v_g}}  - 2\int_{\partial M} {\lambda {e^u}} d{\sigma _g},
		$$
		and
		$$
		\quad J(u, \psi  ):=2 \int_M\left(\langle\slashiii{D} \psi  , \psi  \rangle-\rho e^u|\psi  |^2\right) dv_g .
		$$
		Then $E_\rho(u, \psi  )=I(u)+J(u, \psi  )$, while $J(u, \psi  )$ is quadratic in $\psi  $ and strongly indefinite.
		
		By Theorem A, we could suppose $f$ is the unique minimizer of $I(u)$, which satisfies
		\begin{equation*}
			\begin{cases}
				{ - \Delta f = h\left( x \right){e^{2f}} +1 },&\text{in}\ {M^o}, \\
				{\frac{{\partial f}}{{\partial n}} = \lambda(x){e^{f}} },&\text{on}\ {\partial M}. \\
			\end{cases}
		\end{equation*}	
		Let $e^{-f}\slashiii{D}$ be the weighted Dirac operator about $f$. If $0<\rho\ne \lambda^{f}_i \in \text { Spec} \left( {{e^{ - {f}}}{\slashiii{D}_g}} \right)$, we will define a Nehari manifold $\mathcal{N}$ according to this weighted Dirac operator, and then show that the functional $\left.E_\rho\right|_\mathcal{N}$ possesses either a mountain pass or linking geometry around the trivial solution $(f,0)$, which will yield existence of a non-trivial minimax critical points. It is a standard process to prove that the critical points of the functional (\ref{functional}) are the solutions to the equation (\ref{key}).
		
		\subsection{Local behaviors near $(f,0)$}
		According to the selection of $f$ and Green's formula, for $\forall\  \widetilde u \in H^1(M)$ we have
		\begin{equation*}
			- \int_M \widetilde u \Delta {f}d{v_g} =  - \int_{\partial M} \widetilde u \frac{{\partial f}}{{\partial n}}d{\sigma _g} + \int_M {\nabla \widetilde u\nabla {f}} d{v_g} = \int_M {h{e^{2{f}}\widetilde u}} d{v_g} + \int_M {\widetilde u} d{v_g}.
		\end{equation*}
		Therefore we obtain that
		\begin{equation}\label{scalar_boundary_solution}
			\int_M {\nabla \widetilde u\nabla {f}} d{v_g} = \int_M {h{e^{2{f}}}\widetilde u} d{v_g} + \int_M  \widetilde ud{v_g} + \int_{\partial M} {\lambda {e^{{f}}}\widetilde u} d{\sigma _g} .
		\end{equation}
		Let $v:=u-f$ and ${c_0}: = \mathop {\min }\limits_{\overline M} \left\{ {|h|{e^{2{f}}}} \right\}>0$. Since by (\ref{scalar_boundary_solution})  we have
		\begin{equation*}
			\begin{aligned}
				I\left( u \right) =& I\left( {{f} + v} \right) \\ =& \int_M {\left\{ {{{\left| {\nabla \left( {f + v} \right)} \right|}^2} - h{e^{2\left( {f + v} \right)}} - 2\left( {f + v} \right)} \right\}d{v_g}}  - 2\int_{\partial M} {\lambda {e^{f + v}}} d{\sigma _g} \\
				=& I\left( {{f}} \right) + 2\int_M {\nabla {f}\nabla v} d{v_g} + \int_M {{{\left| {\nabla v} \right|}^2}} d{v_g} - \int_M {h{e^{2{f}}}\left( {{e^{2v}} - 1} \right)} d{v_g} - 2\int_M {v} d{v_g} \\&- 2\int_{\partial M} {\lambda {e^{{f}}}\left( {{e^{v}} - 1} \right)} d{\sigma _g}
				\\=&  I\left( {{f}} \right) + \int_M {{{\left| {\nabla {v}} \right|}^2}} d{v_g} - \int_M {h{e^{2{f}}}\left( {{e^{2v}} - 1 - 2v} \right)} d{v_g} - 2\int_{\partial M} {\lambda {e^{{f}}}\left( {{e^{v}} - 1 - v} \right)} d{\sigma _g}.
			\end{aligned}
		\end{equation*}
		Note that $\lambda(x)<0$ and ${e^x} - 1 - x \geq 0$ for sufficient small $x$. Hence, if $u$ is close to $f$,we have from Jensen's inequality
		\[\begin{aligned}
			I(u) \geq& I\left( f \right) + \left\| {\nabla v} \right\|_2^2 + {c_0}{\left| M \right|_g}\frac{1}{|M|_g}\int_M {\left( {{e^{2v}} - 1 - 2v} \right)d{v_g}}   \\
			\geq& I\left( f \right) + \left\| {\nabla v} \right\|_2^2 + {c_0}{\left| M \right|_g}\left( {{e^{2\bar v}} - 1 - 2\bar v} \right).
		\end{aligned} \]
		Here $\bar{v}=\frac{1}{|M|_g}\int_{M}vdv_g$. We denote $\hat{v}=v-\bar{v}$. It is well know that we can take the equivalent norm of $v\in H^1(M)$ as
		$$
		|\overline{v}|^2+\|\nabla \widehat{v}\|_{L^2}^2 \sim\|v\|_{H^1}^2=\left\| {u - {f}} \right\|_{{H^1}}^2.
		$$
		Now we choose a sufficiently small positive constant $r_0$ such that  $|\overline{v}|^2+\|\nabla \widehat{v}\|_{L^2}^2 < r_0^2$.  Denote that $t^2=|\overline{v}|^2+\|\nabla \widehat{v}\|_{L^2}^2 $.  Then when $|\overline{v}|^2 \geq \frac{t^2}{2} \geq\|\nabla \widehat{v}\|_{L^2}^2$, by ${e^x} - 1 - x \geq C'{\left| x \right|^2}$ for $\left| x \right|$ sufficiently small, we have
		$$
		\begin{aligned}
			{I}\left( {u} \right) & \geq I \left( {{f}} \right) + \left\| {\nabla v} \right\|_2^2 + {c_0}\left| M \right|_g\left( {{e^{2\overline {v} }} - 1 - 2\overline {v} } \right) \hfill \\
			&\geq I \left( {{f}} \right) +C(r_0)t^2.
		\end{aligned}
		$$
		When  $\|\nabla \widehat{v}\|_{L^2}^2 \geq \frac{t^2}{2} \geq|\overline{v}|^2$, we also have
		$$
		I(u) \geq  I \left( {{f}} \right) +\frac{1}{2}t^2.
		$$
		Thus in either case we obtain that
		\begin{equation}\label{I}
			I(u) \geq I \left( {{f}} \right)+C(r_0)\left\| {u - {f}} \right\|_{{H^1}}^2 ,
		\end{equation}
		for $r_0$ sufficiently small.
		
		The functional $J$ has neither upper nor lower bounds, which is caused by the strong indefiniteness of the Dirac operator. To overcome this difficulty, we use the Nehari manifold introduced in \cite{jevnikar2020existence1}. The purpose of the Nehari manifold here is to eliminate the weighted negative spatial part of the Sobolev spinor space.
		\subsection{The Nehari Manifold}
		Based on the weighted splitting in (\ref{fenjie}), let
		$$
		\slashiii{D}_{f}^{ \pm}: H_B^{\frac{1}{2}}(\mathbb{S}M) \rightarrow H_{B,f}^{\frac{1}{2}, \pm}(\mathbb{S}M)
		$$
		be the orthonormal projection. Consider the map
		$$
		\begin{aligned}
			G: H^1(M) \times H^{\frac{1}{2}}_B(\mathbb{S}M) & \rightarrow H^{\frac{1}{2},-}_{B,f}(\mathbb{S}M), \\
			(u, \psi) & \mapsto \slashiii{D}_{f}^{-}\left[(e^f+|\slashiii{D}|)^{-1}\left(\slashiii{D} \psi-\rho e^u \psi\right)\right] .
		\end{aligned}
		$$
		Note that the mapping $G $ is related to the solution $f $, for simplicity, we still refer to it as $G $. Define the Nehari manifold $\mathcal{N}_{f}=G^{-1}(0)$, which is non-empty since $(u, 0) \in \mathcal{N}_{f}$ for any $u \in H^1(M)$.  Since $H_B^{\frac{1}{2}}(\mathbb{S}M)$ is a Hilbert space with the inner product
		$$
		\left\langle\psi,\phi\right\rangle_{H_B^{\frac{1}{2}}(\mathbb{S}M)}= \int_M {{e^{{f}}}\left\langle {\psi ,\phi } \right\rangle d{v_g}}  + \int_M {\left\langle {{{\left| \slashiii{D} \right|}^{\frac{1}{2}}}\psi ,{{\left| \slashiii{D} \right|}^{\frac{1}{2}}}\phi } \right\rangle d{v_g}}=\langle(e^f+|\slashiii{D}|)\psi, \phi\rangle_{H_B^{-\frac{1}{2}}(\mathbb{S}M)\times H_B^{\frac{1}{2}}(\mathbb{S}M)},
		$$
		We know that for any $\varphi \in H_B^{\frac{1}{2}}(\mathbb{S}M)$
		\begin{eqnarray*}
			\langle G(u, \psi),\varphi\rangle_{H_B^{\frac{1}{2}}(\mathbb{S}M)}&=&\langle (e^f+|\slashiii{D}|)^{-1}\left(\slashiii{D} \psi-\rho e^u \psi\right ), \slashiii{D}_{f}^{-}(\varphi)\rangle_{H_B^{\frac{1}{2}}(\mathbb{S}M)}\\
			&=&\langle\slashiii{D} \psi-\rho e^u \psi, \slashiii{D}_{f}^{-}(\varphi)\rangle_{H_B^{-\frac{1}{2}}(\mathbb{S}M)\times H_B^{\frac{1}{2}}(\mathbb{S}M)}.
		\end{eqnarray*}
		Hence
		$$
		\mathcal{N}_{f}=\left\{(u,\psi) \in H^1(M)\times H_B^{\frac{1}{2}}\left(\mathbb{S}M\right): \int_M\left\langle\slashiii{D} \psi-\rho e^u \psi, \zeta \right\rangle dv_g=0, \quad \forall \zeta \in H^{\frac{1}{2},-}_{B,f}\right\} .
		$$
		From above we have
		$$
		\begin{aligned}\label{G-xianzhi}
			(u, \psi) \in \mathcal{N}_{f}& \Leftrightarrow G(u, \psi)=0 \Leftrightarrow \int_M\left\langle\slashiii{D} \psi-\rho e^u \psi, \zeta \right\rangle dv_g=0, \quad \forall \zeta \in H^{\frac{1}{2},-}_{B,f}, \\
			& \Leftrightarrow G_j(u, \psi):=\int_M\left\langle\slashiii{D} \psi-\rho e^u \psi, \varphi^{f}_j\right\rangle dv_g=0, \quad \forall j<0.
		\end{aligned}
		$$
		Thus we have
		$$
		\mathcal{N}_f=G^{-1}(0)=\bigcap_{j<0} G_j^{-1}(0) .
		$$
		Note that, for each $u$ fixed, the subset
		$$
		\mathcal{N}_{f,u}:=\left\{\left.\psi \in H^{\frac{1}{2}}_B(\mathbb{S}M) \right\rvert\,(u, \psi) \in \mathcal{N}_f\right\}=\operatorname{ker}\left[\slashiii{D}_f^{-} \circ(e^f+|\slashiii{D}|)^{-1} \circ\left(\slashiii{D}-\rho e^u\right)\right]
		$$
		is a linear subspace of infinite dimension. Hence we have a fibration $\mathcal{N}_{f}\rightarrow H^1(M)$ with fiber $\mathcal{N}_{f,u}$ over $u \in H^1(M)$. The total space $\mathcal{N}_{f}$ is contractible.
		
		Now let $\left(u_1, \psi_1\right)$ be a critical point in $\mathcal{N}_f$ of $\left.J_\rho\right|_{\mathcal{N}_{f}}, i.e.\ \nabla^{\mathcal{N}_{f}} J\left(u_1, \psi_1\right)=0$. Then there exist $\mu_j \in \mathbb{R}$ such that
		$$
		\mathrm{d} J_\rho\left(u_1, \psi_1\right)=\sum_{j<0} \mu_j \mathrm{~d} G_j\left(u_1, \psi_1\right).
		$$
		Testing both sides with tangent vectors of the form $(0, h)$, we have
		$$
		\int_M\left\langle\slashiii{D} \psi_1-\rho e^{u_1} \psi_1, h\right\rangle dv_g=\sum_{j<0} \mu_j \int_M\left\langle\slashiii{D} h-\rho e^{u_1} h, \varphi_j^{f}\right\rangle dv_g.
		$$
		In particular, takeing $\varphi=h=\sum_{j<0} \mu_j \varphi_j^{f} \in H^{\frac{1}{2},-}_{B,{f}}$, we obtain
		$$
		0=\int_M\left\langle\slashiii{D} \psi_1-\rho e^{u_1} \psi_1, \varphi\right\rangle dv_g=\int_M\left\langle\slashiii{D} \varphi-\rho e^{u_1} \varphi, \varphi\right\rangle dv_g \leq-C\|\varphi\|^2-\int_M \rho e^{u_1}|\varphi|^2 dv_g.
		$$
		Thus $\varphi=0$, hence $\mu_j=0$ for all $j<0$. Hence  we have $\mathrm{d} J_\rho\left(u_1, \psi_1\right)=0$. Therefore, we have come to the following conclusion.
		\begin{lemma}
			The Nehari manifold $\mathcal{N}_{f}$ is a natural constraint for $E_\rho$, namely every critical point of $\left.E_\rho\right|_{\mathcal{N}_{f}}$ is an unconstrained critical point of $E_\rho$.
		\end{lemma}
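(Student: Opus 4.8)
The plan is to extend the Lagrange multiplier computation carried out just before the statement — which shows that a critical point of $\left.J_\rho\right|_{\mathcal{N}_f}$ is a free critical point of $J_\rho$ — to the full functional $E_\rho$, using that $E_\rho=I(u)+J(u,\psi)$ with $I$ depending on $u$ alone, so that the variation of $E_\rho$ in any purely spinorial direction equals that of $J$. Let $(u_1,\psi_1)\in\mathcal{N}_f$ be a critical point of $\left.E_\rho\right|_{\mathcal{N}_f}$. Since $\mathcal{N}_f=\bigcap_{j<0}G_j^{-1}(0)$ and one verifies that $0$ is a regular value of $G$, so that $\mathcal{N}_f$ is a $C^1$ submanifold and a natural constraint, the Lagrange multiplier rule yields constants $\mu_j\in\mathbb{R}$ with $\mathrm{d}E_\rho(u_1,\psi_1)=\sum_{j<0}\mu_j\,\mathrm{d}G_j(u_1,\psi_1)$.

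I would then evaluate this identity on the spinorial directions $(0,h)$. Because $I$ does not involve $\psi$ and $J$ is quadratic in $\psi$, self-adjointness of $\slashiii{D}$ under the chirality boundary condition gives $\langle\mathrm{d}E_\rho(u_1,\psi_1),(0,h)\rangle$ equal to a positive multiple of $\int_M\langle\slashiii{D}\psi_1-\rho e^{u_1}\psi_1,h\rangle\,dv_g$, while $\langle\mathrm{d}G_j(u_1,\psi_1),(0,h)\rangle=\int_M\langle\slashiii{D}h-\rho e^{u_1}h,\varphi_j^{f}\rangle\,dv_g$. Testing with $h=\varphi:=\sum_{j<0}\mu_j\varphi_j^{f}\in H^{\frac{1}{2},-}_{B,f}(\mathbb{S}M)$, the left-hand side vanishes because $(u_1,\psi_1)\in\mathcal{N}_f$, whereas the right-hand side collapses to $\int_M\langle\slashiii{D}\varphi-\rho e^{u_1}\varphi,\varphi\rangle\,dv_g$. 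By the equivalent-norm estimate $-\int_M\langle\slashiii{D}\varphi,\varphi\rangle\,dv_g\geq C\|\varphi\|^2$ valid on $H^{\frac{1}{2},-}_{B,f}$, together with $\rho>0$ and $e^{u_1}>0$, this forces $0\leq -C\|\varphi\|^2$, hence $\varphi=0$; linear independence of the $\varphi_j^{f}$ then gives $\mu_j=0$ for all $j<0$, so $\mathrm{d}E_\rho(u_1,\psi_1)=0$, which is the claim.

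The step I expect to require the most care is the functional-analytic justification of the Lagrange multiplier rule: verifying that $G$ is $C^1$ with differential surjective onto $H^{\frac{1}{2},-}_{B,f}(\mathbb{S}M)$ — equivalently, that on this subspace the weighted operator $e^{-f}\slashiii{D}-\rho$ is invertible, which holds precisely because its spectrum $\{\lambda_j^{f}-\rho:j<0\}$ stays bounded away from $0$ when $\rho>0$ — and, correspondingly, that the resulting multiplier, a priori only a bounded functional on $H^{\frac{1}{2},-}_{B,f}$, is represented via the $H_B^{\frac{1}{2}}$ inner product and the spectral decomposition of $e^{-f}\slashiii{D}$ by an honest spinor $\varphi\in H^{\frac{1}{2},-}_{B,f}\subset H_B^{\frac{1}{2}}(\mathbb{S}M)$, to which the test $(0,\varphi)$ above is admissible. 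Once this is settled the sign argument is immediate, and it makes clear that the hypothesis $\rho>0$ of Theorem \ref{main-Thm} enters in an essential way.
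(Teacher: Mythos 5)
Your proposal is correct and takes essentially the same route as the paper: write $\mathrm{d}E_\rho(u_1,\psi_1)=\sum_{j<0}\mu_j\,\mathrm{d}G_j(u_1,\psi_1)$, test with $(0,h)$, choose $h=\varphi=\sum_{j<0}\mu_j\varphi_j^{f}$ so that the left side vanishes by the Nehari constraint while the right side equals $\int_M\langle\slashiii{D}\varphi-\rho e^{u_1}\varphi,\varphi\rangle\,dv_g\leq -C\|\varphi\|^2$, forcing $\varphi=0$ and hence all $\mu_j=0$. The only difference is that you additionally flag the regular-value/surjectivity justification for the multiplier rule, which the paper leaves implicit.
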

		Now consider the functional with $(u, \psi  ) \in \mathcal{N}_{f}$. Since $
		\int_M\left\langle\slashiii{D} \psi  -\rho e^u \psi  ,  \psi_{f}^{-}  \right\rangle dv_g=0,
		$ for $(u, \psi  ) \in \mathcal{N}_{f}$, we have
		\begin{equation}\label{Fenlie}
			\begin{aligned}
				E_\rho(u, \psi  ) & =I(u)+J(u, \psi  )=I(u)+2 \int_M\left(\langle\slashiii{D} \psi  , \psi  \rangle-\rho e^u|\psi  |^2\right) dv_g \\
				&  = I(u) + 2\int_M {\left\langle {\left( {\slashiii{D} - \rho {e^{{f}}}} \right)\psi ,{\psi _{f}^ + }} \right\rangle } d{v_g} + 2\int_M {\rho {e^{{f}}}} \left( {1 - {e^{u - {f}}}} \right)\left\langle {\psi ,{\psi _{f}^ + }} \right\rangle d{v_g}.
			\end{aligned}
		\end{equation}
		For the last integral above, there exists $\theta(x)\in[0,1]$ such that
		\[\begin{aligned}
			2\left| {\int_M \rho  {e^f}\left( {1 - {e^{u - f}}} \right)\left\langle {\psi ,\psi _f^ + } \right\rangle d{v_g}} \right| \leq& C\int_M {{e^{\theta (x)\left( {u - f} \right)}}\left| {u - f} \right|} \left| \psi  \right|\left| {\psi _f^ + } \right|d{v_g}  \\
			\leq& C{\left\| {{e^{u - f}}} \right\|_{{L^4}}}{\left\| {u - f} \right\|_{{L^4}}}{\left\| \psi  \right\|_{{L^4}}}{\left\| {\psi _f^ + } \right\|_{{L^4}}}  \\
			\leq& C\left( {{r_0}} \right){\left\| {u - f} \right\|_{{H^1}}}\left\| \psi  \right\|\left\| {\psi _f^ + } \right\|,
		\end{aligned} \]
		by Moser-Trudinger inequality for a sufficiently small $r_0$ such that ${\left\| {u - f} \right\|_{{H^1}}} < {r_0}$. 
		For the middle integral of (\ref{Fenlie}), we write $\psi  =\sum_{j \in \mathbb{Z}_*} a_j \varphi^{f}_j$.  Then we have
		$$
		2 \int_M\left\langle(\slashiii{D} - \rho {e^{{f}}}) \psi  , \psi_f  ^{+}\right\rangle dv_g=\sum_{j>0} 2\left(\lambda^{f}_j-\rho\right) a_j^2 .
		$$
		In the sequel, we will assume that $\rho \notin \operatorname{Spec}(e^{-f}\slashiii{D})$. Then the above summation can be split into two parts
		\begin{equation}\label{fanhanfenjie}
			2 \int_M\left\langle(\slashiii{D} - \rho {e^{{f}}}) \psi  , \psi _{f}^ +\right\rangle dv_g=-\sum_{0<\lambda^{f}_j<\rho} 2\left(\rho-\lambda^{f}_j\right) a_j^2+\sum_{\lambda^{f}_j>\rho} 2\left(\lambda^{f}_j-\rho\right) a_j^2 .
		\end{equation}
		\subsection{Existence via minimax theory}
		Next, we will use different minimax theorems in variational methods to find the existence of solutions to equation (\ref{key2}) based on the different ranges of parameter $\rho$.
		\subsubsection{$0<\rho<\lambda^{f}_1$}
		Firstly, let us consider the easier case $0<\rho<\lambda^{f}_1$. In this case, since the first part of the summation of (\ref{fanhanfenjie}) vanishes, we can use the mountain pass theorem on the Nehari manifold $\mathcal{N}_{f}$ to find the critical point of the functional $E_\rho$. To this purpose, let $(u, \psi  ) \in \mathcal{N}_{f}$ be close to $(f,0)$. The constraint that defines $\mathcal{N}_{f}$, i.e. $\slashiii{D}^{-}(1+|\slashiii{D}|)^{-1}\left(\slashiii{D} \psi  -\rho e^u \psi  \right)=0$, implies
		$$
		\int_M\left\langle\slashiii{D} \psi  -\rho e^u \psi  , \slashiii{D}_{f}^{-} \psi  \right\rangle dv_g=0 .
		$$
		Since $\left\|e^{u-f}\right\|_{L^p} \leq C\left(1+\|e^{u-f}\|_{H^1}\right) \leq C$ for $\|u\|$ uniformly bounded, we have
		$$
		\begin{aligned}
			C{\left\| {\psi _f^ - } \right\|^2} 	&= -\int_M\left\langle\slashiii{D} \psi  , \psi_{f} ^{-}\right\rangle dv_g \\
			& =-\rho \int_M e^u\left\langle\psi _{f}^ ++\psi_{f} ^{-}, \psi_{f} ^{-}\right\rangle \\
			& =-\rho \int_M e^{u-f}e^{f}\left|\psi_{f} ^{-}\right|^2 dv_g-\rho \int_M e^{u-f}e^{f}\left\langle\psi _{f}^ +, \psi_{f} ^{-}\right\rangle dv_g\\
			&\leq C\rho {\left( {\int_M {{e^{2f}}\left| {\psi _f^ + } \right|^4} d{v_g}} \right)^{\frac{1}{4}}}{\left( {\int_M {{e^{2f}}\left| {\psi _f^ - } \right|^4} d{v_g}} \right)^{\frac{1}{4}}} \\
			&\leq C\rho \left\| {\psi _f^ + } \right\|\left\| {\psi _f^ - } \right\|  .
		\end{aligned}
		$$
		Hence we get
		\begin{equation}\label{Xuanliang}
			\left\|\psi_{f} ^{-}\right\| \leq C \rho\left\|\psi _{f}^ +\right\| .
		\end{equation}
		This means that if there is a non-zero negative part in spinor space of a spinor $\psi$, there must be a non-zero positive part of a spinor $\psi$ provided $(u, \psi  ) \in \mathcal{N}_{f}$. Then, locally near $(f,0)$ in $\mathcal{N}_{f}$, we have
		$$
		\begin{aligned}
			E_\rho(u, \psi  ) & \geq I(f)+C\left\| {u - {f}} \right\|_{{H^1}}^2+C^{-1}\left(1-\frac{\rho}{\lambda_1^{f}}\right)\left\|\psi _{f}^ +\right\|^2-C\|u-f\|_{H^1}\|\psi  \|\left\|\psi _{f}^ +\right\| \\
			& \geq I(f)+C\left\| {u - {f}} \right\|_{{H^1}}^2+C^{-1}\left(1-\frac{\rho}{\lambda_1^{f}}-C^2\|\psi  \|^2\right)\left\|\psi _{f}^ +\right\|^2,
		\end{aligned}
		$$
		where we have used Cauchy-Schwarz inequality for the last term, of cubic order. It follows that when $r_0^2>\left\| {u - {f}} \right\|_{{H^1}}^2+\|\psi  \|_{H^{\frac{1}{2}}}^2=r^2>0$ is small, there exists a continuous function $\theta(r)>0$ such that
		$$
		E_\rho(u, \psi  ) \geq E(f,0)+\theta(r) .
		$$
		
		On the other hand, we can choose a fixed large constant $t \in H^1(M)$ such that $\rho e^{t}>\lambda^{f}_1+1$ and then take $s>0$ large such that
		$$
		\begin{aligned}
			E_\rho\left(f+t, s \varphi_1^{f}\right)  =&I({f}) + \int_M {-2{t} - h{e^{2{f}}}} \left( {{e^{2{t}}} - 1} \right)d{v_g} + \int_{\partial M} { - \lambda {e^{{f}}}\left( {{e^{{t}}} - 1} \right)d\sigma }    \\
			&+ 2\int_M {\left\langle {\left( {\slashiii{D} - \rho {e^{{f}}}{e^t}} \right){s\varphi_1^{f}},{s\varphi _1^{f}}} \right\rangle } d{v_g} \\
			=	&I(f)+\left( {{c_1}{e^{2{t}}} +c_2e^t- {c_3}{t} - {c_4}} \right)- 2\left(\rho {e^{{{t}}} - {\lambda^{f}_1}} \right){s^2}
		\end{aligned}
		$$
		is negative.
		
		Thus we have the mountain pass geometry locally near $(f,0)$ in the Nehari manifold $\mathcal{N}_{f}$. Since  $\mathcal{N}_{f}$ is contractible, it is connected. Let $\Gamma$ be the space of paths connecting $(f,0)$ and $\left(f+t, s \varphi_1^{f}\right)$ inside $\mathcal{N}_{f}$ (notice that $\Gamma \neq \emptyset$) parametrized by $\xi\in[0,1]$, and define
		$$
		c:=\inf _{\alpha \in \Gamma} \sup _{\xi \in[0,1]} E_\rho(\alpha(\xi)) .
		$$
		From above arguments we have that $c>I(f)$. Under the $(PS)_c$ condition on $\left.E_\rho\right|_{\mathcal{N}_{f}}$ in following Section 4, it follows that $c$ is a critical value for $E_\rho$ with a critical point at this level, which is different from the trivial one. This concludes the proof of Theorem \ref{main-Thm} for $0<\rho<\lambda^{f}_1$.

		\subsubsection{$\rho \in\left(\lambda^{f}_k, \lambda^{f}_{k+1}\right),k\geq 1$} 
		When $\rho \in\left(\lambda^{f}_k, \lambda^{f}_{k+1}\right)$ for some $k \geq 1$, we will use the linking method on Banach manifolds to find critical points of the functional $E_\rho$ on $\mathcal{N}_{f}$. Firstly, we introduce several definitions and theorems about linking method from \cite{chang1993infinite}.
		\begin{definition}\label{homo_link}
			Assume $\mathcal{M}$ stands for $C^2$-Finsler manifold. Let $\mathcal{D}$ be a $k$-topological ball in $\mathcal{M}$, and let $\mathcal{L}$ be a subset in $\mathcal{M}$. We say that $\partial \mathcal{D}$ and $\mathcal{L}$ homotopically link, if $\partial \mathcal{D} \cap \mathcal{L}=\emptyset$ and $\varphi(\mathcal{D}) \cap \mathcal{L} \neq \emptyset$ for each $\varphi \in C(\mathcal{D}, \mathcal{M})$ such that $\left.\varphi\right|_{\partial \mathcal{D}}=\left.\mathrm{id}\right|_{\partial \mathcal{D}}$.
		\end{definition}
		Record $F_c:=\left\{ {x \in \mathcal{M}:F(x) \leq c} \right\}$ as the level set of $F$. From \cite{chang1993infinite} we have following theorems.
		\begin{theorem}(\cite[Theorem 1.2, Chapter II]{chang1993infinite})\label{pi_k}
			Let $\mathcal{D}$ and $\mathcal{L}$ be defined as Definition \ref{homo_link} above. Assume that $\partial \mathcal{D}$ and $\mathcal{L}$ homotopically link. If $F \in$ $C\left(\mathcal{M}, \mathbb{R}^1\right)$ satisfies
			\begin{equation}\label{link1}
				F(x)>a, \quad \forall x \in \mathcal{L},
			\end{equation}
			\begin{equation}\label{link2}			
				F(x) \leq a, \quad \forall x \in \partial \mathcal{\mathcal{D}},
			\end{equation}
			then $\pi_k\left(F_b, F_a\right) \neq 0$, where $b>\operatorname{Max}\{F(x) \mid x \in \overline{\mathcal{D}}\}$.
		\end{theorem}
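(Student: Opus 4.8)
The plan is to produce a nonzero element of $\pi_k(F_b,F_a)$ directly from the inclusion of the pair $(\mathcal{D},\partial\mathcal{D})$, and to use the homotopical linking of Definition \ref{homo_link} to rule out its vanishing. First I would record the two elementary containments coming from the hypotheses: since $b>\operatorname{Max}\{F(x)\mid x\in\overline{\mathcal{D}}\}$ and $\overline{\mathcal{D}}$ is compact, $F<b$ on $\mathcal{D}$, so $\mathcal{D}\subseteq F_b$; and since $F(x)\le a$ for all $x\in\partial\mathcal{D}$ by (\ref{link2}), we get $\partial\mathcal{D}\subseteq F_a$. Fixing a basepoint $s_0\in\partial\mathcal{D}\subseteq F_a$ and a homeomorphism $(\mathcal{D},\partial\mathcal{D},s_0)\cong(D^k,S^{k-1},\ast)$, the inclusion then defines a map of pointed pairs $\iota:(D^k,S^{k-1},\ast)\to(F_b,F_a,s_0)$, hence an element $[\iota]\in\pi_k(F_b,F_a,s_0)$. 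The theorem reduces to the assertion $[\iota]\neq0$.

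Next I would argue by contradiction, invoking the standard compression criterion for relative homotopy groups: $[\iota]=0$ holds if and only if $\iota$ is homotopic rel $S^{k-1}$, through maps into $F_b$, to a map $g$ with $g(D^k)\subseteq F_a$. Suppose such a $g$ exists. Transported back to $\mathcal{D}$, this yields a continuous map $g:\mathcal{D}\to F_b\subseteq\mathcal{M}$ with $g|_{\partial\mathcal{D}}=\mathrm{id}|_{\partial\mathcal{D}}$ and $g(\mathcal{D})\subseteq F_a$. Applying the homotopical linking hypothesis to $g$ (which restricts to the identity on $\partial\mathcal{D}$), we must have $g(\mathcal{D})\cap\mathcal{L}\neq\emptyset$, so there is $p\in\mathcal{D}$ with $g(p)\in\mathcal{L}$. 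Then (\ref{link1}) gives $F(g(p))>a$, while $g(p)\in F_a$ forces $F(g(p))\le a$ — a contradiction. Hence $[\iota]\neq0$, and in particular $\pi_k(F_b,F_a)\neq0$.

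The step I expect to require the most care is the correct use of the compression criterion: one needs the nullhomotopy of $\iota$ to be \emph{rel} $\partial\mathcal{D}$ and to stay inside $F_b$, since it is precisely this that makes the deformed map $g$ simultaneously restrict to the identity on $\partial\mathcal{D}$ (so that the linking hypothesis applies to it) and remain in $F_a$ (so that the value of $F$ at the linking point cannot exceed $a$). Everything else is routine: the containments $\mathcal{D}\subseteq F_b$ and $\partial\mathcal{D}\subseteq F_a$, the identification of the topological ball with $(D^k,S^{k-1})$, and the reinterpretation of the homotopy on $\mathcal{M}$; the degenerate case $k=0$ is handled separately by interpreting $\pi_0$ as a pointed set.
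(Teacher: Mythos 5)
Your argument is correct: the containments $\mathcal{D}\subseteq F_b$, $\partial\mathcal{D}\subseteq F_a$, the compression criterion for relative homotopy groups, and the contradiction with homotopical linking via (\ref{link1})--(\ref{link2}) constitute the standard proof of this result. The paper itself offers no proof — it quotes the theorem verbatim from \cite[Theorem 1.2, Chapter II]{chang1993infinite} — and your reconstruction matches the argument given in that source, including the key point you flag, namely that the nullhomotopy must be rel $\partial\mathcal{D}$ so that the compressed map still restricts to the identity on $\partial\mathcal{D}$ and the linking hypothesis applies to it.
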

		\begin{theorem}(\cite[Theorem 1.4, Chapter II]{chang1993infinite})\label{Link-Thm}
			Let $a<b$ be regular values of $F$. Set
			$$
			c=\inf _{Z \in [\alpha]} \sup _{x \in Z} F(x) \quad \text { with } \alpha \in \pi_k\left(F_b, F_a\right) \text { nontrivial }.
			$$
			Assume that $c>a$, and that $F$ satisfies the $(P S)_c$ condition. Then $c$ is a critical value of $F$.
		\end{theorem}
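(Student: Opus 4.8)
The plan is to run the classical minimax argument: assume that $c$ is \emph{not} a critical value of $F$ and derive a contradiction by deforming a near-optimal representative of $\alpha$ strictly below the level $c$, the engine being a deformation lemma on the Finsler manifold $\mathcal{M}$ made available by the $(PS)_c$ condition.

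First I would record that $c$ is finite: any representative $Z\in[\alpha]$ is (the image of) a map of pairs $(D^k,S^{k-1})\to(F_b,F_a)$, so $\sup_{x\in Z}F(x)\le b$ and hence $c\le b$, while $c>a$ is assumed. Now suppose $K_c:=\{x\in\mathcal{M}:F(x)=c,\ \mathrm{d}F(x)=0\}$ is empty. Then the $(PS)_c$ condition yields a uniform bound $\|\mathrm{d}F(x)\|\ge\delta_0>0$ on a strip $\{\,|F(x)-c|\le\delta\,\}$ for some $\delta>0$: otherwise a sequence $x_n$ with $F(x_n)\to c$ and $\|\mathrm{d}F(x_n)\|\to 0$ would, by $(PS)_c$, subconverge to a point of $K_c$. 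Feeding this lower bound into the quantitative deformation lemma for $C^1$ functionals on complete $C^2$-Finsler manifolds (see \cite{chang1993infinite}), I obtain, for every small enough $\varepsilon>0$, a continuous family of homeomorphisms $\eta_t:\mathcal{M}\to\mathcal{M}$, $t\in[0,1]$, with $\eta_0=\mathrm{id}$, such that: (i) $F(\eta_t(x))\le F(x)$ for all $x,t$; (ii) $\eta_1(F_{c+\varepsilon})\subset F_{c-\varepsilon}$; and (iii) $\eta_t(x)=x$ whenever $F(x)\le c-2\varepsilon$. Using $c>a$, I fix $\varepsilon>0$ so small that $a\le c-2\varepsilon$; then (iii) makes the whole deformation fix $F_a$ pointwise, and (i) makes each $\eta_t$ carry $F_b$ into $F_b$ and $F_a$ into $F_a$.

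Next, by the definition of $c$ as an infimum over $[\alpha]$, I pick a representative $Z:(D^k,S^{k-1})\to(F_b,F_a)$ of $\alpha$ with $\sup_{x\in Z}F(x)<c+\varepsilon$; then $Z(D^k)\subset F_{c+\varepsilon}$ and of course $Z(D^k)\subset F_b$. I consider $\eta_1\circ Z$. By (ii), $\eta_1(Z(D^k))\subset F_{c-\varepsilon}\subset F_b$; on the boundary, $Z(S^{k-1})\subset F_a$, so (iii) gives $\eta_1\circ Z=Z$ there and in particular $\eta_1(Z(S^{k-1}))\subset F_a$. By (i) the family $t\mapsto\eta_t\circ Z$ is a homotopy of maps of pairs $(D^k,S^{k-1})\to(F_b,F_a)$ joining $Z$ to $\eta_1\circ Z$, so $[\eta_1\circ Z]=[Z]=\alpha$ and $\eta_1\circ Z$ is again an admissible competitor for the minimax; but $\sup_{x\in\eta_1\circ Z}F(x)\le c-\varepsilon<c$, contradicting $c=\inf_{Z'\in[\alpha]}\sup_{x\in Z'}F(x)$. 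Hence $K_c\ne\emptyset$, i.e. $c$ is a critical value of $F$.

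The only genuinely non-routine ingredient is the deformation lemma itself in the Finsler-manifold setting, which is where completeness and the $C^2$ structure enter: one builds a locally Lipschitz pseudo-gradient vector field for $F$ by gluing local ones with a partition of unity, truncates it by a cutoff that vanishes on $\{F\le c-2\varepsilon\}$ and equals $1$ on $\{\,|F-c|\le\varepsilon\,\}$, and flows for a controlled time; the bound $\|\mathrm{d}F\|\ge\delta_0$ on the strip, guaranteed by $(PS)_c$ together with $K_c=\emptyset$, is exactly what forces the flow to drop $F$ by at least $2\varepsilon$ inside the strip. Everything else — finiteness of $c$, the choice of $\varepsilon$ with $a\le c-2\varepsilon$, and the relative-homotopy bookkeeping showing $[\eta_1\circ Z]=\alpha$ — is routine. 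I would also stress that applying this theorem still requires producing a \emph{nontrivial} class $\alpha\in\pi_k(F_b,F_a)$ in the first place, which is the separate job of Theorem \ref{pi_k} and is logically independent of the argument above.
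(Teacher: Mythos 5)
This statement is quoted verbatim from Chang's book (\cite[Theorem 1.4, Chapter II]{chang1993infinite}); the paper supplies no proof of its own, so there is nothing internal to compare against. Your argument is the standard minimax proof and is correct: $(PS)_c$ together with the assumed emptiness of $K_c$ gives a uniform lower bound on $\|\mathrm{d}F\|$ in a strip around level $c$, the quantitative deformation lemma on a complete $C^2$-Finsler manifold then pushes a near-optimal representative below $c$ while fixing $F_a$ (using $c>a$ to choose $\varepsilon$ with $a\le c-2\varepsilon$) and preserving the relative homotopy class, contradicting the definition of $c$ — which is exactly the route Chang takes.
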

		In order to construct the linking geometric structure and verify (\ref{link1}) and (\ref{link2}) based on eigenvalues, we use the method from \cite{jevnikar2020existence1}. Firstly,  We decompose  the spinor space $H_B^{\frac{1}{2}}(\mathbb{S} M)$ into two parts:
		$$
		H_{B,f}^{\frac{1}{2}, k+}:=\left\{\left.\phi_1 \in H_{B}^{\frac{1}{2}}(\mathbb{S} M) \right\rvert\, \phi_1=\sum_{j>k} a_j \varphi^{f}_j\right\} \text {, } \quad
		H_{B,f}^{\frac{1}{2}, k-}:=\left\{\left.\phi_2 \in H_B^{\frac{1}{2}}(\mathbb{S} M) \right\rvert\, \phi_2=\sum_{j \leq k} a_j \varphi^{f}_j\right\}.
		$$
		Hence, we have then the orthogonal decomposition of the spinor space
		$$
		H_B^{\frac{1}{2}}(\mathbb{S} M)=H_{B,f}^{\frac{1}{2}, k+} \oplus\left(H_{B,f}^{\frac{1}{2}, k-} \cap H_{B,f}^{\frac{1}{2},+}(\mathbb{S} M)\right) \oplus H_{B,f}^{\frac{1}{2},-}(\mathbb{S} M) .
		$$
		Set
		$$
		\mathcal{N}_{f}^k:=\{f\} \times\left(H_{B,f}^{\frac{1}{2}, k-} \cap H_{B,f}^{\frac{1}{2},+}(\mathbb{S} M)\right) \subset H^1(M) \times H_B^{\frac{1}{2}}(\mathbb{S} M) .
		$$
		Based on weighted orthogonality, $\mathcal{N}_{f}^k$ is a linear subspace inside $\mathcal{N}_{f}$. In this subspace the functional $E_\rho$ is not larger than the minimal critical value:
		\begin{equation}\label{phi_2}
			E_\rho\left(f, \phi_2\right)=I(f)-2 \sum_{0<j \leq k}\left(\rho-\lambda^{f}_j\right) a_j^2 \leq I(f),~~~\forall (f,\phi_2)\in \mathcal{N}_{f}^k.
		\end{equation}
		
		Next let us construct $\mathcal{L}$ and validate (\ref{link1}) for $F=E_\rho$ and $a=I(f)$. To construct $\mathcal{L}$, we consider the following cone around $\mathcal{N}_{f}^k$ for $\tau>0$:
		$$
		\begin{array}{r}
			\mathcal{C}_\tau\left(\mathcal{N}_{f}^k\right):=\left\{(u, \psi) \in \mathcal{N}_{f}\mid u \in H^1(M), \psi=\phi_1+\phi_2+\psi_{f}^{-} \in H_{B,f}^{\frac{1}{2}, k+} \oplus\left(H_{B,f}^{\frac{1}{2}, k-} \cap H_{B,f}^{\frac{1}{2},+}(\mathbb{S} M)\right) \oplus H_{B,f}^{\frac{1}{2},-}(\mathbb{S} M),\right. \\
			\left.{\left\| {u - {f}} \right\|^2}_{H^1}+\left\|\phi_1\right\|^2+\left\|\psi_{f}^{-}\right\|^2<\tau\left\|\phi_2\right\|^2\right\} .
		\end{array}
		$$
		Notice that, for $(u, \psi) \in \mathcal{N}_{f} \backslash \mathcal{C}_\tau\left(\mathcal{N}_{f}^k\right)$ with $\psi=\phi_1+\phi_2+\psi_{f}^{-}$, we have
		$$
		\left\| {u - {f}}\right\|_{{H^1}}^2+\left\|\phi_1\right\|^2+\left\|\psi_{f}^{-}\right\|^2 \geq \tau\left\|\phi_2\right\|^2.
		$$
		From (\ref{Xuanliang}), if $\|u\|$ uniformly bounded, we also have
		$$
		\left\|\psi_{f}^{-}\right\|^2 \leq C \rho^2\left(\left\|\phi_1\right\|^2+\left\|\phi_2\right\|^2\right).
		$$
		Putting the above two inequalities together,  we have for any $(u, \psi) \in \mathcal{N}_{f} \backslash \mathcal{C}_\tau\left(\mathcal{N}_{f}^k\right)$ ,
		\begin{equation}\label{Zhui}
			\left\|\phi_2\right\|^2 \leq \frac{1}{\tau-C \rho^2}\left(\left\| {u - {f}} \right\|_{{H^1}}^2+\left(1+C \rho^2\right)\left\|\phi_1\right\|^2\right),
		\end{equation}
		provided $\|u\|$ uniformly bounded, which also means that
		\begin{equation}\label{First-part-big}
			\left\| {u - {f}} \right\|_{{H^1}}^2+\left\|\phi_1\right\|^2 \geq C\left({\left\| {u - {f}} \right\|^2}+{\left\| \psi  \right\|^2}\right),
		\end{equation}
		for some $C=C(\rho, \tau)>0$.
		
		Notice that for the scalar component we have the same control as that in (\ref{I}), i.e.
		$$
		I(u) \geq I(f)+C\left\| {u - {f}} \right\|_{{H^1}}^2
		$$
		provided that $\left\| {u - {f}} \right\|_{{H^1}}^2+{\left\| \psi  \right\|^2}=r^2<r_0^2$ is small. Moreover, for the spinorial part, since $\psi=\phi_1+\phi_2+\psi_{f}^{-}$ is an orthogonal decomposition, we have
		$$
		\begin{aligned}
			J(u, \psi)= & 2 \int_M\left\langle(\slashiii{D} - \rho {e^{{f}}}) \psi, \psi_f^{+}\right\rangle dv_g+2 \rho \int_Me^{f}\left(1-e^{u-f}\right)\left\langle\psi, \psi_f^{+}\right\rangle dv_g \\
			= & 2 \int_M\left\langle(\slashiii{D} - \rho {e^{{f}}}) \phi_1, \phi_1\right\rangle dv_g+2 \int_M\left\langle(\slashiii{D} - \rho {e^{{f}}}) \phi_2, \phi_2\right\rangle dv_g \\
			& +2 \rho \int_M  e^{f}\left(1-e^{u-f}\right)\left\langle\psi, \phi_1+\phi_2\right\rangle dv_g \\
			\geq & C\left(1-\frac{\rho}{\lambda^{f}_{k+1}}\right)\left\|\phi_1\right\|^2-C\left(\frac{\rho}{\lambda^{f}_k}-1\right)\left\|\phi_2\right\|^2-C \rho\|u-f\|_{H^1}\|\psi\|\left(\left\|\phi_1\right\|+\left\|\phi_2\right\|\right) \\
			\geq& C\left( {1 - \frac{\rho }{{\lambda _{k + 1}^{{f}}}}} \right){\left\| {{\phi _1}} \right\|^2} - C\left( {\frac{\rho }{{\lambda _k^{{f}}}} - 1} \right){\left\| {{\phi _2}} \right\|^2} - \frac{{C({r_0})}}{2}\left\| {u - {f}} \right\|_{{H^1}}^2 - {\widetilde C}({r_0}){\left\| \psi  \right\|^2}{\left( {\left\| {{\phi _1}} \right\| + \left\| {{\phi _2}} \right\|} \right)^2}.
		\end{aligned}
		$$
		Hence, for $(u, \psi) \in \mathcal{N}_{f} \backslash \mathcal{C}_\tau\left(\mathcal{N}_{f}^k\right)$, if we assuming $\left\| {u - {f}} \right\|_{{H^1}}^2+{\left\| \psi  \right\|^2}=r^2<r_0^2$ is small, we get  from (\ref{Zhui})
		$$
		\begin{aligned}
			E_\rho(u, \psi) \geq & I(f)+\frac{C(r_0)}{2}\left\| {u - {f}} \right\|_{{H^1}}^2+C\left(1-\frac{\rho}{\lambda^{f}_{k+1}}-\widetilde C r^2\right)\left\|\phi_1\right\|^2 \\
			& -\widetilde C\left(\frac{\rho}{\lambda^{f}_k}-1-C r^2\right)\left\|\phi_2\right\|^2 \\
			\geq & I(f)+C\left\| {u - {f}} \right\|_{{H^1}}^2+C\left(1-\frac{\rho}{\lambda^{f}_{k+1}}-C r^2\right)\left\|\phi_1\right\|^2 \\
			& -C\left(\frac{\rho}{\lambda^{f}_k}-1\right) \frac{\left\| {u - {f}} \right\|_{{H^1}}^2+\left(1+C \rho^2\right)\left\|\phi_1\right\|^2}{\tau-C \rho^2}.
		\end{aligned}
		$$
		By (\ref{First-part-big}), consequently, let us choose $r$ small enough and then choose $\tau$ large enough such that
		$$
		\begin{aligned}
			E_\rho\left(u, \psi\right) & \geq I(f)+C\left({\left\| {u - {f}} \right\|^2}+\left\|\phi_1\right\|^2\right) \\
			& \geq I(f)+C r^2
		\end{aligned}
		$$
		outside the cone $\mathcal{C}_\tau\left(\mathcal{N}_{f}^k\right)$.
		For $r$ as above, consider the set
		$$
		\mathcal{L}: = \left( {\mathcal{N}_{f}\backslash \mathcal{C}_\tau\left( {{\mathcal{N}_{f}^k}} \right)} \right) \cap \partial {B_r}({f},0),
		$$
		which is non-empty since $\left(f, r \varphi^f_{k+1}\right) \in \mathcal{L}$. The definition of $\mathcal{L}$ implies that
		\begin{equation}\label{L}
			E_\rho\left(u, \psi\right) \geq I(f)+C r^2>I(f),
		\end{equation}
		for $(u,\psi)\in \mathcal{L}$ and $r>0$.
		
		In the following, we construct $\mathcal{D}$ and show (\ref{link2}) for $a=I(f)$. To this purpose, we use the set
		$$
		B_R^{k}(f):=\left\{\left(f, \phi_2\right) \in \mathcal{N}_{f}^k \mid\left\|\phi_2\right\| \leq R\right\} \subset \mathcal{N}_{f}^k,
		$$
		where $R>0$ is a large constant to be fixed later. Notice that $B_R^{k}(f)$ is a $k$-topology ball as $\mathcal{N}_{f}^k$ is a linear subspace. From (\ref{phi_2})  we know that for any $\left(f, \phi_2\right)$
		$$
		E_\rho\left(f, \phi_2\right) \leq I(f),
		$$
		while for $\left(f, \phi_2\right) \in \partial B_R^{k}(f)$
		$$
		E_\rho\left(f, \phi_2\right) \leq I(f)-C\left(\frac{\rho}{\lambda^{f}_k}-1\right)\left\|\phi_2\right\|^2 \leq I(f)-C\left(\frac{\rho}{\lambda^{f}_k}-1\right) R^2 .
		$$
		Hence, for any $\left(f, \phi_2\right) \in \partial B_R^{k}(f)$, we use the following curves to construct $\partial \mathcal{D}$. First, we set
		$$
		\sigma_1(t):=\left(f+t, \phi_2+ At  \varphi^f_{k+1}\right),
		$$
		where $A>0$ is also a constant to to be determined later. It follows from the weighted orthogonality to get for $j<0$
		$$
		\begin{aligned}
			&\int_M {\left\langle {\slashiii{D}\left( {{\phi _2} + At{\varphi^f _{k + 1}}} \right) - \rho {e^{{f} + t}}\left( {{\phi _2} + At{\varphi^f _{k + 1}}} \right),\varphi _j^{{f}}} \right\rangle } d{v_g} \hfill \\
			= &\int_M {\left\langle {\slashiii{D}\left( {{\phi _2} + At{\varphi^f _{k + 1}}} \right),\varphi _j^{{f}}} \right\rangle } d{v_g} - \rho {e^t}\int_M {{e^{{f}}}\left\langle {{\phi _2},\varphi _j^{{f}}} \right\rangle } d{v_g} - \rho {e^t}At\int_M {{e^{{f}}}\left\langle {{\varphi^f _{k + 1}},\varphi _j^{{f}}} \right\rangle } d{v_g}\\
			=&\int_M {\left\langle {{\phi _2} + At\varphi _{k + 1}^f,{e^f}\lambda _j^f\varphi _j^f} \right\rangle } d{v_g}\\
			=&0,
		\end{aligned}
		$$
		which implies that $\sigma_1(t)$ is a curve in $\mathcal{N}_{f}$, i.e. $\sigma_1:[0, T] \rightarrow \mathcal{N}_{f}$ is well defined. We claim that there exist $T,\ A$ and $R>0$ such that
		\begin{equation}\label{3.10}
			{E_\rho }\left( {{\sigma _1(t)}} \right) < I({f})
		\end{equation}
		for $\forall\left(f, \phi_2\right) \in \partial B_R^{k}(f)$ and $\forall t \in[0, T]$. In fact, we have
		$$
		\begin{aligned}
			E_\rho\left(f+t, \phi_2+A t \varphi^f_{k+1}\right)= &I({f}) + \int_M {-2t - h{e^{2{f}}}} \left( {{e^{2t}} - 1} \right)d{v_g} - \int_{\partial M} {  \lambda {e^{{f}}}\left( {{e^{t}} - 1} \right)d\sigma }    \\
			& + 2\int_M {\left\langle {\left( {\slashiii{D} - \rho {e^{{f}}}{e^t}} \right){\phi _2},{\phi _2}} \right\rangle } d{v_g}+2 A^2 t^2 \int_M\left\langle\left(\slashiii{D} - \rho {e^{{f}}} e^t\right) \varphi^f_{k+1}, \varphi^f_{k+1}\right\rangle dv_g \\
			\leq &I({f}) + \left( {{c_1}{e^{2t}} - {c_2}t - {c_3}} \right)+2 \int_M\left\langle(\slashiii{D} - \rho {e^{{f}}}) \phi_2, \phi_2\right\rangle dv_g+2 A^2 t^2\left(\lambda^{f}_{k+1}-\rho e^t\right) \\
			\leq & I(f)+\left( {{c_1}{e^{2t}} - {c_2}t - {c_3}} \right)-c_4\left(\frac{\rho}{\lambda^{f}_k}-1\right) R^2-2 A^2 t^2\left(\rho e^t-\lambda^{f}_{k+1}\right) .
		\end{aligned}
		$$
		We fix the constants as following. We choose $T>0$ such that
		$$\rho e^T-\lambda^{f}_{k+1} \geq 1,$$
		and then we choose $A>0$ such that
		$$
		I(f)+({{c_1}{e^{2T}} - {c_2}T - {c_3}})-2 A^2 T^2\left(\rho e^T-\lambda^{f}_{k+1}\right)<I(f),
		$$
		and, finally, choose $R>0$ such that for any $t \in[0, T]$
		$$
		I(f)+({{c_1}{e^{2t}} - {c_2}t - {c_3}})-c_4\left(\frac{\rho}{\lambda^{f}_k}-1\right) R^2+2 A^2 t^2\left(\lambda^{f}_{k+1}-\rho e^t\right)<I(f) .
		$$ Thus we obtain (\ref{3.10}) for the chosen constants $T, A$ and $R$. Next we consider the curve
		$$
		\sigma_2:[-1,1] \rightarrow \mathcal{N}_{f}, \quad \sigma_2(r):=\left(f+T,(-r) \phi_2+A T \varphi^f_{k+1}\right),
		$$
		which joins $\left(f+T, \phi_2+A T \varphi^f_{k+1}\right)$ to $\left(f+T,-\phi_2+A T \varphi^f_{k+1}\right)$ inside $\mathcal{N}_{f}$. Clearly, for $(u,\psi)\in\sigma_2$ we have $E_\rho(u,\psi)\leq I(f)$. Finally, we  come back to $\mathcal{N}_{f}^k$ via the curve
		$$
		\sigma_3:[0, T] \rightarrow \mathcal{N}_{f}, \quad \sigma_3(t):=\left(f+(T-t), -\phi_2+A(T-t) \varphi^f_{k+1}\right) .
		$$
		Thus, for the fixed constants $A,\ R$ and $T$ selected in the above, we obtain the linking set
		$$
		\begin{aligned}
			\mathcal{D}:=&\left\{\left(f+t, A t \varphi^f_{k+1}+s\phi_2\right) \mid t \in[0, T],s\in [-1,1],\left(f, \phi_2\right) \in\partial B_R^{k}(f)\right\}\\
			=&\left\{\left(f+t, A t \varphi^f_{k+1}+\phi_2\right) \mid t \in[0, T],\left(f, \phi_2\right) \in B_R^{k}(f)\right\},
		\end{aligned}		
		$$
		which is compact $(k+1)$-topology ball and homeomorphic to a finite-dimensional cylindrical segment
		$$
		[0, T] \times B_R^{k}(f) .
		$$
		Note that $\mathcal{D} \subset \mathcal{N}_{f}$ and the curves $\sigma_1, \sigma_2, \sigma_3$ constructed above pass through every point of $\partial \mathcal{D} \backslash B_R^{k}(f)$. It follows that on $\partial \mathcal{D}$ the functional attains low values
		\begin{equation}\label{partial_D}
			E_\rho(u,\psi)\leq I(f),
		\end{equation}
		for all $(u,\psi) \in \partial \mathcal{D}$.
		
		Notice that $B_R^{k}(f)\subset \mathcal{N}_{f}^k$ and ${\mathcal{C}_\tau }\left( {{\mathcal{N}_{f}^k}} \right) \supset {\mathcal{N}_{f}^k}$, therefore $\mathcal{L} \cap B_R^{k}(f)=\emptyset $. By the definition of $\mathcal{D}$, for the fixed $A,\ R$, and $T$, the $r>0$ in the definition of $\mathcal{L}$ can be chosen small such that $\mathcal{L}$ and $\partial \mathcal{D}$ homotopically link. Let $a=I(f)$. According to (\ref{L}) and (\ref{partial_D}), by Theorem \ref{pi_k},  we have $$\pi_{k+1}\left((E_\rho)_b, (E_\rho)_{a}\right) \neq 0$$ for any $b>\operatorname{Max}\{E_\rho(u,\psi) \mid (u,\psi) \in \overline{\mathcal{D}}\}$.
		Choose $b$ as a regular value for $E_\rho$ and set
		$$
		c=\inf _{Z \in [\alpha]} \sup _{x \in Z} E_\rho(u,\psi) \quad \text { with } \alpha \in \pi_{k+1}\left((E_\rho)_b, (E_\rho)_{a}\right) \text { nontrivial },
		$$
		as Theorem \ref{Link-Thm}. By the definition of $\mathcal{L}$ we have
		$$
		c \geq I(f)+\theta(r)>I(f).
		$$
		According to the linking Theorem \ref{Link-Thm} with the $(P S)_c$ condition on $\left.E_\rho\right|_{\mathcal{N}_{f}}$ in Section 4, it follows that $c$ is a critical value for $E_\rho$ on $\mathcal{N}_{f}$. Naturally we obtain a critical point for $E_\rho$ when $\rho \in\left(\lambda^{f}_k, \lambda^{f}_{k+1}\right)$ and the critical point is different from the trivial one. This completes the proof of Theorem \ref{main-Thm}.
		
		\section{The $(PS)_c$ sequence}
		In this section we will verify that the restricted functional $\left.E_\rho\right|_{\mathcal{N}_{f}}$ satisfies the $(PS)_c$ condition on Nehari manifold $\mathcal{N}_{f}$. Recalling the functional $E_\rho$ and $G_j$, for any $(v, \phi) \in H^1(M) \times H_B^{\frac{1}{2}}(\mathbb{S}M)$, we have
		\begin{equation}
			\begin{aligned}
				\left\langle {d{E_\rho }\left( {u,\psi } \right),\left( {v,\phi } \right)} \right\rangle  = &2\int_M {\left\{ {\nabla u\nabla v - h{e^{2u}}v -v - \rho {e^u}{{\left| \psi  \right|}^2}v  } \right\}d{v_g}}  \\+& 4\int_M {\left\{ {\left\langle {\slashiii{D}\psi  - \rho {e^u}\psi ,\phi } \right\rangle } \right\}d{v_g}}  - 2\int_{\partial M} {\lambda {e^u}vd{\sigma _g}}
			\end{aligned}
		\end{equation}
		and
		\begin{equation}
			\left\langle {d{G_j}\left( {u,\psi } \right),\left( {v,\phi } \right)} \right\rangle  = \int_M {\left\langle {\slashiii{D}\phi  - \rho {e^u}\phi ,{\varphi^f _j}} \right\rangle d{v_g}}  - \int_M {\rho {e^u}v\left\langle {\psi ,{\varphi^f _j}} \right\rangle d{v_g}},~~~ \forall j<0.
		\end{equation}
		Therefore, the constrained gradient of functional $E_\rho$ is
		$$
		\left\langle {{d^{\mathcal{N}_{f}}}{E_\rho }(u,\psi ),\left( {v,\phi } \right)} \right\rangle  =  \left\langle {d{E_\rho }\left( {u,\psi } \right),\left( {v,\phi } \right)} \right\rangle- \sum\limits_{j < 0} {{\mu _j}} (u,\psi )\left\langle {d{G_j}(u,\psi ),\left( {v,\phi } \right)} \right\rangle .
		$$
		Writing $ \varphi=\varphi(u, \psi):=\sum_{j<0} \mu_j(u,\psi ) \varphi^{f}_j$, then we obtain that
		$$
		\begin{aligned}
			\left\langle {{d^{\mathcal{N}_{f}}}{E_\rho }(u,\psi ),\left( {v,\phi } \right)} \right\rangle  =& \int_M {\left\{ {2\left( {\nabla u\nabla v - h{e^{2u}}v - v - \rho {e^u}{{\left| \psi  \right|}^2}v} \right) + \rho {e^u}v\left\langle {\psi ,\varphi } \right\rangle } \right\}} d{v_g}
			\\+&\int_M {\left\{ {4\left\langle { \slashiii{D}\psi  - \rho {e^u}\psi ,\phi } \right\rangle  - \left\langle { \slashiii{D}\varphi  - \rho {e^u}\varphi ,\phi } \right\rangle } \right\}} dv_g  - 2\int_{\partial M} {\lambda {e^u}vd{\sigma _g}} .
		\end{aligned}
		$$
		Thus, if we assume that $\left\{ {\left( {{u_n},{\psi _n}} \right)} \right\} \subset \mathcal{N}_{f}$ is $(PS)_c$ sequence, then we have
		\begin{equation}\label{nehari}
			\int_M {\left\{ {\left\langle {\slashiii{D}\psi_n  - \rho {e^{u_n}}\psi_n ,\phi } \right\rangle } \right\}d{v_g}}  = 0,\ \forall \phi  \in {H_{B,f}^{\frac{1}{2}, - }}\left( \mathbb{S}M \right),
		\end{equation}
		\begin{equation}\label{psenergy}
			\int_M {\left\{ {{{\left| {\nabla u_n} \right|}^2}  - h{e^{2u_n}}-2u_n + 2\left\langle {\slashiii{D}\psi_n   - \rho {e^{u_n}}\psi_n  ,\psi_n  } \right\rangle } \right\}d{v_g}}   -2\int_{\partial M} {{\lambda{e^{u_n}} } } d{\sigma_g} \to c,
		\end{equation}
		\begin{equation}\label{psd1}
			\begin{aligned}
				&2\int_M {\left\{ {\nabla {u_n}\nabla v - h{e^{2{u_n}}}v -v - \rho {e^{{u_n}}}{{\left| {{\psi _n}} \right|}^2}v+\frac 12\rho {e^{{u_n}}}\left\langle {{\psi _n},{\varphi _n}} \right\rangle v}  \right\}} d{v_g}  \\-&2\int_{\partial M} {{\lambda {e^{{u_n}}}v } } d{\sigma _g} \to 0,\ \forall v \in {H^1}(M),
			\end{aligned}	
		\end{equation}
		and
		\begin{equation}\label{psd2}
			4\left( {\slashiii{D}{\psi _n} - \rho {e^{{u_n}}}{\psi _n}} \right) - \left( {\slashiii{D}{\varphi _n} - \rho {e^{{u_n}}}{\varphi _n}} \right) = {\alpha _n} \to 0 ~~~\text { in } H_B^{-\frac 12}(\mathbb{S}M).
		\end{equation}
		
		Next, we verify that the  $(PS)_c$ sequence $\left\{ {\left( {{u_n},{\psi _n}} \right)} \right\} \subset \mathcal{N}_{f}$ is bounded under boundary conditions.
		\begin{lemma}\label{lm3.2} For any $(PS)_c$ sequence $\left\{ {\left( {{u_n},{\psi _n}} \right)} \right\} \subset \mathcal{N}_{f}$, it holds that
			
			(1) The auxiliary spinors $\varphi_n(u_n,\psi_n)$ satisfy $\left\|\varphi_n\right\|_{H^{\frac{1}{2}}} \rightarrow 0$ as $n \rightarrow \infty$.
			
			(2) The sequence $\left\{ {\left( {{u_n},{\psi _n}} \right)} \right\} $ is uniformly bounded  in $H^1(M) \times$ $H_B^{\frac{1}{2}}(\mathbb{S}M)$.
		\end{lemma}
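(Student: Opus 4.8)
I would test the $H_B^{-\frac12}$-identity \eqref{psd2} directly against the auxiliary spinor $\varphi_n\in H_{B,f}^{\frac12,-}(\mathbb S M)$. Because $\varphi_n$ lies in the negative weighted eigenspace, the Nehari constraint \eqref{nehari} kills the cross term $\int_M\langle\slashiii{D}\psi_n-\rho e^{u_n}\psi_n,\varphi_n\rangle\,dv_g$, so \eqref{psd2} collapses to
$$-\int_M\langle\slashiii{D}\varphi_n,\varphi_n\rangle\,dv_g+\rho\int_M e^{u_n}|\varphi_n|^2\,dv_g=\langle\alpha_n,\varphi_n\rangle.$$
On $H_{B,f}^{\frac12,-}(\mathbb S M)$ the quadratic form $-\int_M\langle\slashiii{D}\cdot,\cdot\rangle\,dv_g$ is an equivalent norm, hence bounds $\|\varphi_n\|^2$ from below; since $\rho>0$ the second term on the left is nonnegative; and the right-hand side is at most $\|\alpha_n\|_{H^{-\frac12}}\|\varphi_n\|$. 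Therefore $\|\varphi_n\|\le C\|\alpha_n\|_{H^{-\frac12}}\to 0$, which is (1). Note this step uses no information on the size of $(u_n,\psi_n)$.

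\textbf{Plan for part (2): reduction.} The hypotheses $h<0$, $\lambda<0$ (this is where $\chi(M)<0$ enters) make $I$ coercive with bounded sublevel sets: by Jensen's inequality one gets $I(u)\ge\|\nabla u\|_{L^2}^2+c\,e^{2\overline u}-2|M|_g\overline u$ with $\overline u=\tfrac1{|M|_g}\int_M u\,dv_g$ and $c>0$, and since $t\mapsto c\,e^{2t}-2|M|_g t$ is convex and coercive, an upper bound $I(u)\le C$ forces $\overline u$ into a fixed compact interval and $\|\nabla u\|_{L^2}$ to be bounded, hence $\|u\|_{H^1}\le C$ and, by \eqref{MT_ineq1}, $\|e^{u}\|_{L^p}\le C$ for every $p$. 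Since $E_\rho(u_n,\psi_n)=I(u_n)+J(u_n,\psi_n)=c+o(1)$, it suffices to bound $J(u_n,\psi_n)$ from below (which then controls $\|u_n\|_{H^1}$ and all exponential norms of $u_n$) and afterwards to bound $\|\psi_n\|$.

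\textbf{Plan for part (2): the spinor.} Using \eqref{nehari} I would write $J(u_n,\psi_n)=2\int_M\langle\slashiii{D}\psi_n-\rho e^{u_n}\psi_n,\psi_{n,f}^+\rangle\,dv_g$ and separate the weight $e^{u_n}$ from $e^{f}$ as in \eqref{Fenlie}--\eqref{fanhanfenjie}, obtaining
$$J(u_n,\psi_n)=\sum_{\lambda_j^f>\rho}2(\lambda_j^f-\rho)a_j^2-\sum_{0<\lambda_j^f<\rho}2(\rho-\lambda_j^f)a_j^2+2\rho\int_M\bigl(e^{f}-e^{u_n}\bigr)\langle\psi_n,\psi_{n,f}^+\rangle\,dv_g,$$
where $\psi_n=\sum_j a_j\varphi_j^f$: the first block is positive definite and controls $\|\psi_n^{V_+}\|^2$ from below, the second is carried by the finite-dimensional space $V_-=\mathrm{span}\{\varphi_j^f:0<\lambda_j^f<\rho\}$, and the last is an error governed by $\|e^{f}-e^{u_n}\|_{L^2}$. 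The dangerous quantities are $\|\psi_n^{V_-}\|$ and this weighted cross term. For $\|\psi_n^{V_-}\|$ I would test the constrained gradient against $(0,\psi_n^{V_-})$ and use part (1) ($\varphi_n\to0$) to see that $\|\psi_n^{V_-}\|$ is controlled by the Palais--Smale errors together with the same type of $e^{u_n}$-weighted cross terms. For the cross terms I would use \eqref{MT_ineq1} and the embedding $H_B^{\frac12}\hookrightarrow L^4$ of Lemma \ref{spinor_embed} to absorb them into the coercive surplus $\|\nabla u_n\|_{L^2}^2$ coming from $I$; the relations obtained by testing \eqref{psd1} with $v\equiv1$ and $v=u_n-f$ supply the remaining scalar identities. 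Once $\|u_n\|_{H^1}$ is bounded, \eqref{Xuanliang} upgrades a bound on $\|\psi_{n,f}^+\|$ to a bound on the full norm $\|\psi_n\|$, completing (2).

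\textbf{Expected main obstacle.} The real difficulty is the coupling through $e^{u_n}$: the scalar bound needs $J$ bounded below, which needs the spinor under control, which needs $\|e^{u_n}\|_{L^p}$ bounded, which needs the scalar bound. Breaking this circularity requires handling the scalar and spinor estimates simultaneously --- exploiting the strict coercivity of $I$ (the extra $\|\nabla u_n\|_{L^2}^2$ and the exponential growth in $\overline u_n$) and the finite dimensionality of $V_-$ to close a system of inequalities forcing $\|u_n\|_{H^1}+\|\psi_n\|\le C$. Everything else --- Cauchy--Schwarz, Hölder, Lemma \ref{spinor_embed}, and $e^x-1-x\ge0$ --- is routine.
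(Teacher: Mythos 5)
Your part (1) is exactly the paper's argument and is complete: testing \eqref{psd2} against $\varphi_n$, killing the cross term by the Nehari constraint \eqref{nehari}, and using that $-\int_M\langle\slashiii{D}\cdot,\cdot\rangle\,dv_g$ is an equivalent norm on $H_{B,f}^{\frac12,-}(\mathbb{S}M)$ is precisely what the paper does. For part (2), however, you have correctly diagnosed the central difficulty --- the circular dependence between the scalar bound (which needs the spinorial energy under control) and the spinor bound (which needs $\|e^{u_n}\|_{L^p}$ under control) --- but you have not resolved it; ``close a system of inequalities'' is exactly the step that must be executed, and the mechanism you gesture at (coercivity of $I$ plus finite-dimensionality of $V_-=\mathrm{span}\{\varphi_j^f:0<\lambda_j^f<\rho\}$) is not the one that works here. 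Your route of bounding $J$ from below via the spectral decomposition still requires an a priori bound on the $V_-$-projection of $\psi_n$ and on the weighted cross term $\int_M(e^f-e^{u_n})\langle\psi_n,\psi_{n,f}^+\rangle\,dv_g$, which is the same circle; the finite-dimensional splitting is used in the paper only for the linking geometry, not for the Palais--Smale analysis.

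The paper breaks the circle quantitatively. First, testing \eqref{psd2} against $\psi_n$ itself (the $\varphi_n$-term drops by \eqref{nehari} and self-adjointness) gives $\int_M\langle\slashiii{D}\psi_n-\rho e^{u_n}\psi_n,\psi_n\rangle\,dv_g=o(\|\psi_n\|)$; inserted into the energy identity \eqref{psenergy} and combined with Jensen's inequality and $h<0$, $\lambda<0$, this yields that $|\overline u_n|$, $\int_M(|\nabla u_n|^2-he^{2u_n})\,dv_g$, $-\int_{\partial M}\lambda e^{u_n}\,d\sigma_g$ and (testing \eqref{psd1} with $v\equiv1$) $\rho\int_Me^{u_n}|\psi_n|^2\,dv_g$ are all bounded by $C(1+c+o(\|\psi_n\|))$ --- at most \emph{linear} in $\|\psi_n\|$ with infinitesimal coefficient. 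Second, testing \eqref{psd2} against $\psi_{n,f}^{+}$ (and using \eqref{nehari} for $\psi_{n,f}^{-}$) and applying H\"older with exponents $\tfrac12$ and $\tfrac14$ to the quantities just bounded gives
$\|\psi_n\|^2\le C\left(1+c+o(\|\psi_n\|)\right)^{3/4}\|\psi_n\|+o(\|\psi_n\|)$.
The exponent $3/4<1$ makes the right-hand side strictly sub-quadratic in $\|\psi_n\|$, which forces $\|\psi_n\|\le C$, after which the scalar bounds become unconditional. Without this (or an equivalent quantitative closing of the loop) your part (2) remains a plan rather than a proof.
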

		\begin{proof}
			(1) Firstly we know that $\varphi_n \in H_{B,f}^{\frac{1}{2},{-}}(\mathbb{S}M)$. Then by (\ref{nehari}) we obtain that
			$$
			\int_M\left\langle\slashiii{D} \psi_n-\rho e^{u_n} \psi_n, \varphi_n\right\rangle dv_g=0.
			$$
			Then we test (\ref{psd2}) against $\varphi_n$ and get
			$$
			-\int_M\left\langle\slashiii{D} \varphi_n, \varphi_n\right\rangle dv_g+\rho \int_M e^{u_n}\left|\varphi_n\right|^2 dv_g=\left\langle\alpha_n, \varphi_n\right\rangle_{H^{-\frac{1}{2}} \times H_B^{\frac{1}{2}}}.
			$$
			Using the fact  $\varphi_n \in H_{B,f}^{\frac{1}{2},{-}}(\mathbb{S}M)$ and the equivalent norm of $ H_{B,f}^{\frac{1}{2},{-}}(\mathbb{S}M)$ in (\ref{equ-norm}), it is clear  that
			$$
			C\left\|\varphi_n\right\|_{H^{\frac{1}{2}}}^2+\rho \int_M e^{u_n}\left|\varphi_n\right|^2 dv_g=o\left(\left\|\varphi_n\right\|_{H^{\frac{1}{2}}}\right) .
			$$
			It follows that as $n \rightarrow \infty$,
			$$
			\left\|\varphi_n\right\|_{H^{\frac{1}{2}}} \rightarrow 0, \quad \int_M \rho e^{u_n}\left|\varphi_n\right|^2 dv_g \rightarrow 0 .
			$$
			
			(2)Since $\varphi_n \in H_{B,f}^{\frac{1}{2},{-}}(\mathbb{S}M)$, we test firstly (\ref{psd2}) against $\psi_n$ to  get
			\begin{equation}\label{xuanliang-jifen}
				4\int_M {\left\langle {{\text{\slashiii{D}}}{\psi _n} - \rho {e^{{u_n}}}{\psi _n},{\psi _n}} \right\rangle d{v_g}}  = o\left( {{{\left\| {{\psi _n}} \right\|}}} \right).
			\end{equation}
			Noticing $\lambda(x)<0$ and combining (\ref{xuanliang-jifen}) with
			(\ref{psenergy}), on the one hand, we see that
			$$
			\begin{aligned}
				\int_M {\left( {{{\left| {\nabla {u_n}} \right|}^2} - h{e^{2{u_n}}}} \right)} d{v_g}
				=& \int_M {2{u_n}d{v_g}} +2 \int_{\partial M} { {\lambda {e^{{u_n}}} } } d{\sigma _g}+c + o(1) + o\left( {{{\left\| {{\psi _n}} \right\|}}} \right)\\
				\leq & 2  \left| M \right|_g\overline u_n  + C+c + o(1) + o\left( {{{\left\| {{\psi _n}} \right\|}}} \right).
			\end{aligned}
			$$
			Also noticing  that $h(x)<0$, by Jensen's inequality, we have
			$$
			\min \left\{ {\left| h \right|} \right\}\left| M \right|_g{e^{2\overline u_n }} \leq \int_M { - h{e^{2{u_n}}}} d{v_g} \leq 2 \left| M \right|_g\overline u_n  +C+ c + o(1) + o\left( {{{\left\| {{\psi _n}} \right\|}}} \right).
			$$
			Since $\min\left\{ { \left| h \right|} \right\}>0$ and $e^u=e^{u^+-u^-}=e^{u^+}+e^{-u^-}-1$,  we have
			\[\begin{aligned}
				&\min \left\{ {\left| h \right|} \right\}{e^{2{{\bar u}_n}}} - 2{\left| M \right|_g}{{\bar u}_n} - 2{\left| M \right|_g}\left| {{{\bar u}_n}} \right| \hfill \\
				= &\min \left\{ {\left| h \right|} \right\}\left( {{e^{2\bar u_n^ + }} + {e^{ - 2\bar u_n^ - }} - 1} \right) - {\left| M \right|_g}\left( {2\bar u_n^ +  - 2\bar u_n^ - } \right) - {\left| M \right|_g}\left( {2\bar u_n^ +  + 2\bar u_n^ - } \right) \hfill \\
				= &\min \left\{ {\left| h \right|} \right\}{e^{2\bar u_n^ + }} - 2{\left| M \right|_g}\left( {2\bar u_n^ + } \right) + \min \left\{ {\left| h \right|} \right\}\left( {{e^{ - 2\bar u_n^ - }} - 1} \right) \\
				\geq & -C.
			\end{aligned} \]
			Then there are $c_1$ and $c_2$ such that
			$$
			{c_1}\left| {{{\overline u }_n}} \right| - {c_2} \leq \min\left\{ { \left| h \right|} \right\}{e^{2{{\overline u }_n}}} - 2  \left| M \right|_g{\overline u _n} \leq C+ c + o(1) + o\left( {{{\left\| {{\psi _n}} \right\|}}} \right).
			$$
			Thus there exists $C=C(\rho)>0$ such that
			$$
			\left|\overline{u}_n\right| \leq C\left(1+c+o\left(\left\|\psi_n\right\|\right)\right) ,\quad n\to \infty,
			$$
			and
			\begin{equation}\label{zhengzheng}
				\int_M {\left( {{{\left| {\nabla {u_n}} \right|}^2} - h{e^{2{u_n}}}} \right)} d{v_g}\leq C\left(1+c+o\left(\left\|\psi_n\right\|\right)\right), \quad n \to \infty.
			\end{equation}
			On the other hand, we have
			$$
			\int_M {\left( {{{\left| {\nabla {u_n}} \right|}^2} - h{e^{2{u_n}}}} \right)} d{v_g} - \int_{\partial M} {\lambda {e^{{u_n}}}d{\sigma _g}} = \int_M {2{u_n}d{v_g}}   + c + o(1) + o\left( {{{\left\| {{\psi _n}} \right\|}}} \right).
			$$
			Therefore it holds that
			\begin{equation}\label{3.9}
				- \int_{\partial M} {\lambda {e^{{u_n}}}d{\sigma _g}}  \leq C\left( {1 + c + o\left( {{{\left\| {{\psi _n}} \right\|}}} \right)} \right),\quad n \to \infty .
			\end{equation}
			In order to get the bound of $\int_M {{e^{{u_n}}}} {\left| {{\psi _n}} \right|^2}d{v_g}$, we test (\ref{psd1}) against $v \equiv 1 \in H^1(M)$ and obtain
			$$
			2\int_M {\left( { - h{e^{2{u_n}}} -1 - \rho {e^{{u_n}}}{{\left| {{\psi _n}} \right|}^2}} \right)} dv_g + \int_M {\rho {e^{{u_n}}}\left\langle {{\psi _n},{\varphi _n}} \right\rangle } dv - 2\int_{\partial M}  {\lambda {e^{{u_n}}}  d{\sigma _g}}  = o(1),
			$$
			which can be read as
			$$
			\int_M  -  h{e^{2{u_n}}}d{v_g} -\left| M \right|_g + \int_{\partial M} { { - \lambda {e^{{u_n}}} } d{\sigma _g}}  + o(1) = \rho \int_M {{e^{{u_n}}}} {\left| {{\psi _n}} \right|^2}d{v_g} - \frac{\rho }{2}\int_M {{e^{{u_n}}}} \left\langle {{\psi _n},{\varphi _n}} \right\rangle d{v_g} .
			$$
			Now we can control the second integral on the right-hand side by
			$$
			\left|\frac{\rho}{2} \int_M e^{u_n}\left\langle\psi_n, \varphi_n\right\rangle dv_g\right| \leq \varepsilon \int_M \rho e^{u_n}\left|\psi_n\right|^2 dv_g+\varepsilon \int_M e^{2 u_n} dv_g+C(\varepsilon, \rho)\left\|\varphi_n\right\|^4,
			$$
			where $\varepsilon>0$ is some small number. Thus, by (\ref{zhengzheng}) and (\ref{3.9}), there exists $C=C(\epsilon, \ \rho)>0$ such that
			$$
			\rho \int_M {{e^{{u_n}}}} {\left| {{\psi _n}} \right|^2}d{v_g}\leq C\left( {1 + c + o\left( {{{\left\| {{\psi _n}} \right\|}}} \right)} \right).
			$$

			For the spinors, we also can use the above growth estimates to control them. For convenience, we abbreviated $\left( {{\psi _n}} \right)_f^ \pm $ as ${{\psi_n }} ^ \pm$ in the following. Firstly, we test $(\ref{psd2})$ against $\psi_n^{+}$ to get
			$$
			4 \int_M\left(\left\langle\slashiii{D} \psi_n, \psi_n^{+}\right\rangle-\rho e^{u_n}\left\langle\psi_n, \psi_n^{+}\right\rangle\right) dv_g-\int_M\left\langle\slashiii{D} \varphi_n-\rho e^{u_n} \varphi_n, \psi_n^{+}\right\rangle dv_g=\left\langle\alpha_n, \psi_n^{+}\right\rangle_{H^{-\frac{1}{2}} \times H_B^{\frac{1}{2}}} .
			$$
			Then by using the above growth estimates, it is clear that
			$$
			\begin{aligned}
				C\left\|\psi_n^{+}\right\|^2 \leq & \int_M\left\langle\slashiii{D} \psi_n, \psi_n^{+}\right\rangle dv_g\\
				=&\int_M \rho e^{u_n}\left\langle\psi_n, \psi_n^{+}\right\rangle dv_g+\frac{1}{4} \int_M\left\langle\slashiii{D} \varphi_n-\rho e^{u_n} \varphi_n, \psi_n^{+}\right\rangle dv_g+o\left(\left\|\psi_n^{+}\right\|\right) \\
				\leq &\rho \left(\int_M  e^{u_n}\left|\psi_n\right|^2 dv_g\right)^{\frac{1}{2}}\left(\int_M e^{2 u_n} dv_g\right)^{\frac{1}{4}}\left(\int_M\left|\psi_n^{+}\right|^4 dv_g\right)^{\frac 14} \\
				& +\left\|\varphi_n\right\|\left\|\psi_n^{+}\right\|+\frac{\rho}{4}\left(\int_M e^{2 u_n} dv_g\right)^{\frac{1}{2}}\left\|\varphi_n\right\|\left\|\psi_n^{+}\right\|+o\left(\left\|\psi_n^{+}\right\|\right) \\
				\leq & C\left(1+c+o\left(\left\|\psi_n\right\|\right)\right)^{\frac{3}{4}}\left\|\psi_n^{+}\right\|+o\left(\left\|\psi_n^{+}\right\|\right) .
			\end{aligned}
			$$
			For the other component $\psi_n^{-}$, by using (\ref{nehari}), we yield that
			$$
			\begin{aligned}
				C\left\|\psi_n^{-}\right\|^2 & \leq-\int_M\left\langle\slashiii{D} \psi_n, \psi_n^{-}\right\rangle dv_g=-\rho \int_M e^{u_n}\left\langle\psi_n, \psi_n^{-}\right\rangle dv_g \\
				& \leq \rho\left(\int_M e^{2 u_n} dv_g\right)^{\frac{1}{4}}\left(\int_M e^{u_n}\left|\psi_n\right|^2 dv_g\right)^{\frac{1}{2}}\left\|\psi_n^{-}\right\| \\
				& \leq C\left(1+c+o\left(\left\|\psi_n\right\|\right)\right)^{\frac{3}{4}}\left\|\psi_n^{-}\right\| .
			\end{aligned}
			$$
			Consequently, we obtain that
			$$
			\left\|\psi_n\right\|^2=\left\|\psi_n^{+}\right\|^2+\left\|\psi_n^{-}\right\|^2 \leq C\left(1+c+o\left(\left\|\psi_n\right\|\right)\right)^{\frac{3}{4}}\left\|\psi_n\right\|+o\left(\left\|\psi_n\right\|\right) .
			$$
			Hence we can find some constant $C=C(c, \gamma, \rho)>0$ such that
			$$
			\left\|\psi_n\right\| \leq C(c, \gamma, \rho)<+\infty .
			$$
			This uniform bound (depending on the level $c$ ) in turn gives bounds on $\overline{u}_n$ and $\int_M\left|\nabla \widehat{u}_n\right|^2dv_g$. Thus we have shown that the sequence $\left\{ {({u_n},{\psi _n})} \right\}$ is bounded.
		\end{proof}
		
		By Lemma \ref{lm3.2}, we can assume that $\left\{ {({u_n},{\psi _n})} \right\}$ convergent weakly up to a subsequence. As a consequence, there exist $u_{\infty} \in H^1(M)$ and $\psi_{\infty} \in$ $H_B^{\frac{1}{2}}(\mathbb{S}M)$ such that
		$$
		\begin{aligned}
			& u_n \rightharpoonup u_{\infty} \text { weakly in } H^1(M), \\
			& \psi_n \rightharpoonup \psi_{\infty} \text { weakly in } H_B^{\frac{1}{2}}(\mathbb{S}M) .
		\end{aligned}
		$$
		
		\begin{lemma}
			The pair $\left(u_{\infty}, \psi_{\infty}\right)$ is a smooth solution to (\ref{key}).
		\end{lemma}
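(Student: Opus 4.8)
The goal is to pass to the limit in the constrained Euler--Lagrange relations \eqref{psd1}--\eqref{psd2} along the $(PS)_c$ sequence, thereby showing that $(u_\infty,\psi_\infty)$ solves \eqref{key2} weakly, then to bootstrap elliptic regularity so that the pair is smooth, and finally to invoke Lemma \ref{simply} to obtain a smooth solution of \eqref{key}. First I would record the analytic consequences of Lemma \ref{lm3.2}. Since $\{u_n\}$ is bounded in $H^1(M)$, $\overline u_n$ is bounded and $\|\nabla u_n\|_{L^2}$ is bounded (by \eqref{zhengzheng}); applying the Moser--Trudinger inequalities \eqref{MT_ineq1} to $p u_n$ and \eqref{MT_ineq2} to $p u_n$ for every $p\in[1,\infty)$ shows that $e^{u_n}$ is bounded in $L^p(M)$ and $e^{u_n}|_{\partial M}$ is bounded in $L^p(\partial M)$ for all $p$. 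By Rellich--Kondrachov, along a further subsequence $u_n\to u_\infty$ strongly in every $L^p(M)$ and a.e.; combined with the $L^p$ bounds just obtained, Vitali's theorem gives $e^{2u_n}\to e^{2u_\infty}$ and $e^{u_n}\to e^{u_\infty}$ strongly in every $L^p(M)$, and likewise on $\partial M$ through the trace. By Lemma \ref{spinor_embed}, $\psi_n\to\psi_\infty$ strongly in $L^p(\mathbb{S}M)$ for every $p<4$, hence $|\psi_n|^2\to|\psi_\infty|^2$ in $L^q(M)$ for $q<2$. Recall also $\|\varphi_n\|_{H^{\frac12}}\to 0$ from Lemma \ref{lm3.2}(1).

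Next I would pass to the limit. In \eqref{psd2}, $\slashiii{D}\varphi_n\to 0$ in $H_B^{-\frac12}(\mathbb{S}M)$ and $\rho e^{u_n}\varphi_n\to 0$ in $L^{\frac43}(\mathbb{S}M)$ since $\|\varphi_n\|\to 0$, while $\alpha_n\to 0$; testing against an arbitrary $\phi\in H_B^{\frac12}(\mathbb{S}M)$ and using the strong convergences above yields $\int_M\langle\slashiii{D}\psi_\infty-\rho e^{u_\infty}\psi_\infty,\phi\rangle\,dv_g=0$, i.e. $\slashiii{D}\psi_\infty=\rho e^{u_\infty}\psi_\infty$ weakly with $\mathbf B^{\pm}\psi_\infty=0$. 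In \eqref{psd1} the cubic correction $\tfrac12\rho e^{u_n}\langle\psi_n,\varphi_n\rangle v$ tends to $0$ (again by $\|\varphi_n\|\to 0$ and the uniform bounds on $\psi_n,e^{u_n}$), and each of the remaining terms converges, so for all $v\in H^1(M)$ one obtains $\int_M\bigl(\nabla u_\infty\nabla v-he^{2u_\infty}v-v-\rho e^{u_\infty}|\psi_\infty|^2 v\bigr)\,dv_g-\int_{\partial M}\lambda e^{u_\infty}v\,d\sigma_g=0$. Moreover the constraint \eqref{nehari} passes to the limit, so $(u_\infty,\psi_\infty)\in\mathcal N_f$ and the Lagrange multiplier term disappears in the limit; hence $(u_\infty,\psi_\infty)$ is an unconstrained weak solution of \eqref{key2}. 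One may in fact upgrade the weak convergence to strong convergence in $H^1(M)\times H_B^{\frac12}(\mathbb{S}M)$ by testing \eqref{psd1} against $u_n-u_\infty$ and \eqref{psd2} against $\psi_n^{\pm}-\psi_\infty^{\pm}$ and using the strong $L^p$ convergences together with \eqref{equ-norm}; this yields the $(PS)_c$ condition and, since $c>I(f)=E_\rho(f,0)$, ensures the critical point found is distinct from $(f,0)$.

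Finally I would run the regularity bootstrap. From $u_\infty\in H^1(M)$ and \eqref{MT_ineq1}, $e^{u_\infty}\in L^p(M)$ for every $p$; since $\psi_\infty\in L^4$, the right-hand side $1+he^{2u_\infty}+\rho e^{u_\infty}|\psi_\infty|^2$ of the $u$-equation lies in $L^q(M)$ for some $q>1$, so elliptic estimates for the Neumann problem give $u_\infty\in W^{2,q}(M)\hookrightarrow C^{0,\alpha}(M)$ (using $\dim M=2$), hence $e^{u_\infty}\in L^\infty(M)$. Then $\slashiii{D}\psi_\infty=\rho e^{u_\infty}\psi_\infty\in L^4(\mathbb{S}M)$, and elliptic estimates for the Dirac operator under the chirality boundary condition give $\psi_\infty\in W^{1,4}(\mathbb{S}M)\hookrightarrow C^{0,\alpha}$; consequently $|\psi_\infty|^2\in C^{0,\alpha}$, the $u$-equation has $C^{0,\alpha}$ right-hand side, so $u_\infty\in C^{2,\alpha}$ by Schauder, and alternating the Schauder estimates for the two equations with their (compatible) boundary conditions yields $u_\infty,\psi_\infty\in C^\infty$. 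By Lemma \ref{simply}, applying the inverse conformal change then produces a smooth solution of \eqref{key}. The main obstacle is the exponential nonlinearity: a priori one cannot pass to the limit in $e^{2u_n}$ or $e^{u_n}|\psi_n|^2$, and one must rule out concentration. This is exactly where the a priori bounds of Lemma \ref{lm3.2}---which themselves rest on the hypotheses $\chi(M)<0$ and $h<0$, $\lambda<0$---combined with \eqref{MT_ineq1}--\eqref{MT_ineq2} become essential: they force $e^{u_n}$ to be bounded in every $L^p$, after which the passage to the limit and the bootstrap are routine.
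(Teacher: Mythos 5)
Your proposal is correct and follows essentially the same route as the paper: Moser--Trudinger gives strong $L^p$ convergence of $e^{u_n}$ in $M$ and on $\partial M$, Lemma \ref{spinor_embed} gives strong $L^q$ convergence of $\psi_n$ for $q<4$, one passes to the limit in the weak formulation (the multiplier terms vanishing since $\|\varphi_n\|\to 0$), and regularity follows by bootstrap. The only difference is presentational: the paper defers the elliptic bootstrap to the regularity theory of \cite{jost2014boundary}, whereas you spell it out, and you additionally sketch the upgrade to strong convergence, which belongs to the verification of the $(PS)_c$ condition rather than to this lemma.
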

		\begin{proof}
			By the Moser-Trudinger inequalities (\ref{MT_ineq1}) and (\ref{MT_ineq2}), we obtain that
			$$
			e^{u_n} \rightarrow e^{u_{\infty}} \text { strongly in } L^p(M), \quad(p<\infty) ,
			$$
			and
			$$
			e^{u_n} \rightarrow e^{u_{\infty}} \text { strongly in } L^p(\partial M), \quad(p<\infty) .
			$$
			For the spinor part, by Lemma \ref{spinor_embed} we obtain
			$$
			\psi_n \rightarrow \psi_{\infty} \text { strongly in } L^q(\mathbb{S}M), \quad(q<4) .
			$$
			Hence $e^{u_n}\left|\psi_n\right|^2$ converges weakly in $L^p$ to $e^{u_{\infty}}\left|\psi_{\infty}\right|^2$, for any $p<2$.
			It follows that $\left(u_{\infty}, \psi_{\infty}\right)$ is a weak solution to (\ref{key}). Furthermore, under the assumption of smoothness for $h(x)$ and $\lambda(x)$, it can be proven similar to \cite{jost2014boundary} that the weak solution $\left(u_{\infty}, \psi_{\infty}\right)$ is a smooth solution.
		\end{proof}

		\small
		\bibliographystyle{plain}
		\bibliography{Super-Liouville-Boundary}
	\end{document}